\documentclass[11pt]{article} % use larger type; default would be 10pt
\usepackage[utf8]{inputenc} % set input encoding (not needed with XeLaTeX)

%%% PAGE DIMENSIONS
\usepackage[letterpaper,margin=1in]{geometry} % to change the page dimensions

%%% PACKAGES
\usepackage{enumerate}
\usepackage{amsthm,amsmath,amsfonts,amssymb}

%%% For Hyperlinks inside the PDF file
\usepackage[dvipsnames]{xcolor}
\usepackage[colorlinks=true, pdfstartview=FitV, linkcolor=Blue, citecolor=Blue, urlcolor=Blue]{hyperref}

\usepackage[nottoc,notlot,notlof]{tocbibind}

%%% EQUATION Numbering Starts Fresh at each Section

\numberwithin{equation}{section}

%%% Theorems etc.
\newtheorem{theorem}{Theorem}[section]
\newtheorem{conjecture}[theorem]{Conjecture}

\newtheorem{corollary}[theorem]{Corollary}
\newtheorem{proposition}[theorem]{Proposition}
\newtheorem{lemma}[theorem]{Lemma}

\theoremstyle{definition}
\newtheorem{definition}[theorem]{Definition}
\newtheorem{remark}[theorem]{Remark}

\renewcommand{\tilde}{\widetilde}
\renewcommand{\hat}{\widehat}

\usepackage{dsfont} % for indicator \indc
    \newcommand{\indc}{\mathds{1}}
    \renewcommand{\mathbb}{\mathds}

% general usage macro
\newcommand{\N}{\ensuremath{\mathbb{N}}}

\newcommand{\R}{\ensuremath{\mathbb{R}}}
\newcommand{\T}{\mathbb{T}}
\newcommand{\e}{\mathrm{e}}

% differential operator from Beccari, Claudio. TUGboat, 18(1), 1997
	% the complicated stuff is just for spacing
	\makeatletter
	\providecommand*{\diff}%
	{\@ifnextchar^{\DIfF}{\DIfF^{}}}
	\def\DIfF^#1{%
	\mathop{\mathrm{\mathstrut d}}%
	\nolimits^{#1}\gobblespace}
	\def\gobblespace{%
	\futurelet\diffarg\opspace}
	\def\opspace{%
	\let\DiffSpace\!%
	\ifx\diffarg(%
	\let\DiffSpace\relax
	\else
	\ifx\diffarg%
	\let\DiffSpace\relax
	\else
	\ifx\diffarg\{%
	\let\DiffSpace\relax
	\fi\fi\fi\DiffSpace}
    \newcommand{\dd}{\diff}

%%%%%%% Fixing parentheses spacing issues
\let\originalleft\left
\let\originalright\right
\renewcommand{\left}{\mathopen{}\mathclose\bgroup\originalleft}
\renewcommand{\right}{\aftergroup\egroup\originalright}

% specific commands for this paper
\newcommand{\Var}{\mathrm{Var}}

\newcommand{\solP}{\mathcal{P}}

\title{Divergence-free drifts decrease concentration}

\author{Elias Hess-Childs\thanks{Department of Mathematical Sciences, Carnegie Mellon University.
{\footnotesize \href{mailto:aa@cims.nyu.edu}{ehesschi@andrew.cmu.edu}.}
}
\and
Renaud Raqu\'epas\thanks{Courant Institute of Mathematical Sciences,  New York University.
{\footnotesize \href{mailto:rr4374@nyu.edu}{rr4374@nyu.edu}.}
}
\and
Keefer Rowan\thanks{Courant Institute of Mathematical Sciences,  New York University.
{\footnotesize \href{mailto:keefer.rowan@cims.nyu.edu}{keefer.rowan@cims.nyu.edu}.}
}
}
\date{\today}

\begin{document}

\maketitle
\begin{abstract}
    We show that bounded divergence-free vector fields $u : [0,\infty) \times \R^d \to\R^d$ decrease the ``concentration''---quantified by the modulus of absolute continuity with respect to the Lebesgue measure---of solutions to the associated advection-diffusion equation when compared to solutions to the heat equation. In particular, for symmetric decreasing initial data, the solution to the advection-diffusion equation has (without a prefactor constant) larger variance, larger entropy, and smaller $L^p$ norms for all $p \in [1,\infty]$ than the solution to the heat equation. We also note that the same is \textit{not true} on~$\T^d$.

    \medskip 
    \noindent \emph{Keywords} \quad
        advection-diffusion, divergence-free vector field, rearrangement inequalities, variance, entropy

    \noindent \emph{MSC2020} \quad
        35K15, % Initial value problems for second-order parabolic equations
        35Q35, % PDEs in connection with fluid mechanics
        47D07, % Markov semigroups and applications to diffusion processes
        60J60, % Diffusion processes (in 60 Probability)
        76R50 % Diffusion (in 76 Fluids)
\end{abstract}

\section{Introduction}

In this paper, we study the concentration behavior of advection-diffusion equations with divergence-free drifts compared to that of the equation describing pure diffusion, i.e., the heat equation. To be more precise, for bounded divergence-free vector fields $u : [0,\infty)\times \R^d \to \R^d$, we compare the solutions $\theta : [0,\infty)\times \R^d \to[0,\infty)$ and $\varphi : [0,\infty) \times \R^d \to [0,\infty)$ to
\begin{align*}
    \begin{cases}
        \partial_t \theta - \Delta \theta + u \cdot \nabla \theta = 0\\
        \theta(0,\,\cdot\,) = \theta_0(\,\cdot\,),
    \end{cases}
    \quad \text{and} \qquad
    \begin{cases}
        \partial_t \varphi - \Delta \varphi = 0\\
        \varphi(0,\,\cdot\,) = \varphi_0(\,\cdot\,),
    \end{cases}
\end{align*}
respectively.
One particularly interesting limiting case we cover is when $\theta_0 = \varphi_0 =\delta_y$ for some $y \in \R^d$. In this case, we show that $\theta(t,\,\cdot\,)$ is always \textit{less concentrated than} $\varphi(t,\,\cdot\,)$, that is
\[\int_E \theta(t,x)\dd x \leq \int_{B_r(y)} \varphi(t,x)\dd x \quad \text{whenever} \quad |E| = |B_r(y)|\]
and $E$ is Borel. As a consequence, we conclude that for all $p\geq 1,$ $\|\theta(t,\,\cdot\,)\|_{L^p} \leq \|\varphi(t,\,\cdot\,)\|_{L^p}$ and that for the solution to the It\^o diffusion
\begin{equation*}
\begin{cases}
    \dd X_t = u(t,X_t)\dd t +\sqrt{2} \dd W_t\\
    X_0 =y,
\end{cases}\end{equation*}
we have that
\[\Var(X_t) \geq \Var(y+\sqrt{2} W_t).\]

\subsection{Statement of main results}

Denote the space of positive, finite Borel measures on $\mathbb{R}^d$ by $\mathcal{M}_+(\mathbb{R}^d)$. We also let $L^1_+(\R^d)$ denote the subset of $L^1(\R^d)$ given by nonnegative functions. For the $d$-dimensional Lebesgue measure (or volume) and a Borel set~$E\subseteq\R^d$, we often write $|E|$ for $\mathrm{vol}(E)$. We first define the central object of study, $C_\mu$, the \textit{modulus of absolute continuity of $\mu$ with respect to the Lebesgue measure $\mathrm{vol}$}. 

\begin{definition}
    For $\mu\in\mathcal{M}_+(\R^d)$ and $\alpha \geq 0$, we define
    \begin{equation}
    \label{eq:C-mu-def}
    C_\mu(\alpha):=\sup\{\mu(E): E \subseteq \R^d \text{ Borel, }|E|=\alpha\}.
    \end{equation}
    For $f \in L^1_+(\R^d)$, we abuse notation by denoting
    \[C_f(\alpha) := C_\mu(\alpha)\quad \text{where } \mu(\dd x) = f(x)\dd x.\]
\end{definition}

\begin{remark}\label{rem:C_mu-decomposition}
    If $\mu(\dd x)=f(x)\dd x+\beta(\dd x)$ is the Lebesgue decomposition of $\mu$ with respect to $\mathrm{vol}$, 
    then
    $C_\mu(\alpha)=C_f(\alpha)+\beta(\R^d)$.
\end{remark}

\begin{definition}
 For $\mu,\nu\in\mathcal{M}_+(\R^d)$, we say that \textit{$\mu$ is less concentrated than $\nu$}, denoted $\mu \preceq \nu$, if
    \[\forall \alpha \in [0,\infty),\quad C_\mu(\alpha) \leq C_\nu(\alpha).\]
We abuse notation for densities, writing $f \preceq g$ if the analogous relation holds for their associated measures.
\end{definition}

\begin{definition}
\label{def:symmetric-decreasing}
    We say that a function $f : \R^d \to \R$ is \emph{symmetric decreasing} if 
    \[f(x) = f(y) \text{ for all } |x| = |y| \quad\text{and}\quad f(x) \geq f(y) \text{ for all } |x| \leq |y|.\]
    We say that $\mu\in\mathcal{M}_+(\R^d)$ is \emph{symmetric decreasing} if
    \[\mu(\dd x) = f(x)\dd x + c \delta_0(\dd x)\]
    for some symmetric decreasing density $f$ and some $c\geq 0$.
\end{definition}

\begin{remark}
    The definition of symmetric decreasing measures may appear \textit{ad hoc}, but it arises naturally as the weak closure of measures with symmetric decreasing densities in the space of positive measures.
\end{remark}

We note that the above definitions are precisely given so that if $\mu \preceq \nu$ and $\nu$ is symmetric decreasing, then 
\[\mu(E) \leq \nu(B_r)\quad \text{whenever} \quad |E| = |B_r|\]
and $E$ is Borel. We now introduce notation for the solution operators to the heat equation and the advection-diffusion equation. 

\begin{definition}
    For $\mu\in\mathcal{M}_+(\R^d)$, let $\e^{t\Delta}\mu$ denote the time $t$ solution to the heat equation with initial condition $\mu$, that is 
    \[\partial_t \e^{t\Delta} \mu = \Delta \e^{t\Delta}\mu \quad \text{and} \quad \e^{0\Delta}\mu = \mu.\]
    For a divergence-free vector field $u : [0,\infty) \times \R^d \to \R^d$ with $u \in L^\infty_{t,x}$, let $\solP^u_t \mu$ denote the time $t$ solution to the advection-diffusion equation, that is
    \[\partial_t \solP^u_t \mu = \Delta \solP^u_t \mu  - u \cdot \nabla \solP^u_t \mu \quad\text{and}\quad \solP^u_0 \mu = \mu.\]
    We emphasize that we \textbf{do not} assume that $u$ is time-independent.
\end{definition}

We now state our main result, which makes precise the idea that advection-diffusion equations with divergence-free drift always decrease the ``concentration'' of the solution when compared to the heat equation. Note that of particular interest is the case that $\mu = \delta_y, \nu = \delta_0$, which compares the fundamental solution of the advection-diffusion equation to the heat kernel. 

\begin{theorem}
\label{thm:main-result}
    Let $\mu, \nu \in \mathcal{M}_+(\R^d)$ and let $u : [0,\infty) \times \R^d\to \R^d$ be a bounded, measurable, divergence-free vector field. If $\nu$ is symmetric decreasing and $\mu \preceq \nu$, then
    \[\solP_t^u \mu \preceq \e^{t\Delta} \nu\]
    for all $t\geq 0$.
\end{theorem}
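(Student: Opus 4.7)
My strategy is operator splitting combined with a rearrangement inequality for the heat kernel. The plan rests on three observations: (i) pure transport along a divergence-free flow is volume-preserving and therefore preserves $C_\mu$ \emph{exactly}; (ii) the heat semigroup $\e^{s\Delta}$ respects $\preceq$ whenever the dominating measure is symmetric decreasing, and preserves the symmetric decreasing property; and (iii) the advection-diffusion semigroup $\solP^u_t$ can be approximated by alternating transport and heat steps via a Trotter product formula. Once (i)--(iii) are in hand, the theorem follows by induction along the splitting, so the real content sits in (ii).

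For the key lemma, fix a Borel set $E\subseteq\R^d$ with $|E|=\alpha$, let $E^\ast$ be the centered ball of volume $\alpha$, and set $h:=p_s\ast\indc_E$ with $p_s$ the heat kernel. Fubini gives $\e^{s\Delta}\mu(E)=\int h\,\dd\mu$; a layer-cake identity, the definition of $C_\mu$, and the hypothesis $\mu\preceq\nu$ combine to yield
\[ \e^{s\Delta}\mu(E) = \int_0^\infty \mu\{h>\lambda\}\,\dd\lambda \le \int_0^\infty C_\nu(|\{h>\lambda\}|)\,\dd\lambda. \]
The Riesz rearrangement inequality gives $\int_F h\le \int_{F^\ast}(p_s\ast\indc_{E^\ast})$ for every Borel $F$; taking the supremum over $|F|=\beta$ yields $C_h(\beta)\le C_{p_s\ast\indc_{E^\ast}}(\beta)$ for every $\beta$. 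Unfolding $C_\nu$ via Fubini as an integral against the radial distribution of $\nu$, and using that this distribution is non-increasing because $\nu$ is symmetric decreasing, a Hardy-Littlewood-Polya style rearrangement argument converts the integrated comparison $C_h\le C_{p_s\ast\indc_{E^\ast}}$ into
\[ \int_0^\infty C_\nu(|\{h>\lambda\}|)\,\dd\lambda \le \int_0^\infty C_\nu(|\{p_s\ast\indc_{E^\ast}>\lambda\}|)\,\dd\lambda = \int (p_s\ast\indc_{E^\ast})\,\dd\nu = \e^{s\Delta}\nu(E^\ast), \]
where the first equality uses that $\{p_s\ast\indc_{E^\ast}>\lambda\}$ is a centered ball realizing the supremum defining $C_\nu$, and the second uses symmetry of $p_s$. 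Since $\e^{s\Delta}\nu$ is symmetric decreasing, $\e^{s\Delta}\nu(E^\ast)=C_{\e^{s\Delta}\nu}(\alpha)$; supremizing over $E$ proves the lemma.

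To assemble the full proof I would first mollify to reduce to smooth compactly supported $u$ (mollification preserves $\nabla\cdot u=0$) and smooth bounded initial data, using standard $L^1$ stability of advection-diffusion and the bound $|C_f(\alpha)-C_g(\alpha)|\le \|f-g\|_{L^1}$ to pass $\preceq$ to the limit. In this regular setting, letting $\Phi^u_s$ denote the (volume-preserving) flow of $u$ and $T^u_s\mu:=(\Phi^u_s)_\ast\mu$, a time-discretized Trotter formula gives $\solP^u_t\mu=\lim_{n\to\infty}(T^u_{t/n}\,\e^{(t/n)\Delta})^n\mu$ in $L^1$. The identity $C_{T^u_s\mu}=C_\mu$ (from volume preservation) combined with the key lemma propagates, inductively in $k\le n$, the inequality $(T^u_{t/n}\,\e^{(t/n)\Delta})^k\mu\preceq \e^{kt/n\,\Delta}\nu$; sending $n\to\infty$ concludes. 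I expect the main obstacle to be not the rearrangement lemma---whose content is short once organized as above---but the softer analytic bookkeeping: justifying Trotter convergence for time-dependent, merely bounded-measurable $u$, and pushing the approximation through to general $\mu,\nu\in\mathcal{M}_+(\R^d)$ including singular data such as $\delta_y$, where weak convergence of measures interacts delicately with the possible discontinuity of $C_\mu$ at $\alpha=0$.
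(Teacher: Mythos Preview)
Your proposal is correct and takes essentially the same approach as the paper: operator splitting via a Trotter product, combined with the facts that divergence-free transport preserves $C_\mu$ exactly and that $\e^{s\Delta}$ is $\preceq$-order-preserving when the majorant is symmetric decreasing. The paper organizes the key lemma slightly more directly---one application of Riesz to $\iint\indc_E(x)\Phi_s(x-y)\,\mu(\dd y)\dd x$, followed by testing $\mu^*\preceq\nu$ against the symmetric decreasing function $\Phi_s\ast\indc_{E^*}$, rather than your layer-cake/HLP route---and it handles time dependence by first approximating $u$ by piecewise-constant-in-time Lipschitz fields (so that only the standard time-independent Trotter formula is needed), but these are organizational rather than substantive differences.
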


\begin{remark}
If $\nu$ is not symmetric decreasing, the above relation may fail. See the discussion in Subsection~\ref{subseq:motivation} for an example.
\end{remark}

\begin{remark}
    One may wonder if for suitable vector fields $u$ the above relation is strict: $\solP_t^u \mu \preceq \e^{t\Delta}\nu$ and $\e^{t\Delta} \nu \not \preceq \solP^u_t \mu$. This is however not straightforward to deduce from our argument, as is further discussed in Subsection~\ref{ss:overview}.
\end{remark}

Due to the general relations proved in Section~\ref{sec:proof-of-cor}, we deduce the following bounds on $L^p$ norms, variances, and entropies.  The latter two follow the conventions given in Definition~\ref{def:variance-entropy-def}.

\begin{corollary}
\label{cor:main-cor}
    Let $u$ be a bounded, measurable, divergence-free vector field and let $\mu,\nu$ be probability measures. If $\nu$ is symmetric decreasing and $\mu \preceq \nu$,  then 
    \begin{equation}
    \label{eq:Lp-var-ineqs}
    \|\solP^u_t \mu\|_{L^p(\R^d)} \leq \|\e^{t\Delta} \nu\|_{L^p(\R^d)} \quad \text{and} \quad \Var(\solP_t^u\mu) \geq \Var(\e^{t\Delta} \nu)
    \end{equation}
    for all $p \in [1,\infty]$ and $t \geq 0$.
    If, in addition, $\int |x|^\beta\dd \mu(x) <\infty$ for some $\beta>0$, then 
    \begin{equation}
        \label{eq:entropy-ineq}
    H(\solP^u_t \mu) \geq H(\e^{t\Delta} \nu)
    \end{equation}
    for all $t > 0$.
\end{corollary}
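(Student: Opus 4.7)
The plan is to combine Theorem~\ref{thm:main-result} with a package of general functional inequalities that follow from the abstract relation $\mu' \preceq \nu'$ when $\nu'$ is symmetric decreasing. Theorem~\ref{thm:main-result} yields $\solP^u_t\mu \preceq \e^{t\Delta}\nu$, and $\e^{t\Delta}\nu$ itself remains symmetric decreasing (convolution with the symmetric decreasing Gaussian heat kernel preserves this property, for instance via Riesz's rearrangement inequality). So the proof reduces to establishing, for probability measures $\mu' \preceq \nu'$ with $\nu'$ symmetric decreasing, the three claims $\|f_{\mu'}\|_{L^p} \leq \|f_{\nu'}\|_{L^p}$ for every $p \in [1,\infty]$, $\Var(\mu') \geq \Var(\nu')$, and $H(\mu') \geq H(\nu')$ (under the moment hypothesis), which should form the content of Section~\ref{sec:proof-of-cor}.

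The central observation bridging $\preceq$ and classical analysis is that $C_f(\alpha) = \int_{B_r} f^*\dd x$ when $|B_r| = \alpha$, where $f^*$ is the symmetric decreasing rearrangement of $f$; this holds because the supremum defining $C_f$ is attained on a top level set of $f$, which is equimeasurable to the ball $B_r$. Writing $\tilde f(s) = f^*(x)$ with $|B_{|x|}|=s$, this identity translates $\mu' \preceq \nu'$ into $\int_0^\alpha \tilde f_{\mu'}\dd s \leq \int_0^\alpha \tilde f_{\nu'}\dd s$ for every $\alpha \geq 0$. Combined with $\int_0^\infty \tilde f_{\mu'} = 1 = \int_0^\infty\tilde f_{\nu'}$, this is continuous Hardy-Littlewood-P\'olya majorization of $\tilde f_{\mu'}$ by $\tilde f_{\nu'}$, yielding $\int \Phi(f_{\mu'})\dd x \leq \int \Phi(f_{\nu'})\dd x$ for every convex $\Phi:[0,\infty)\to\R$ with $\Phi(0)=0$. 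Taking $\Phi(x) = x^p$ for $p \in [1,\infty)$ gives the $L^p$ bounds; the $p=\infty$ case follows from $\|f\|_{L^\infty} = \lim_{\alpha \to 0^+} \alpha^{-1} C_f(\alpha)$; and $\Phi(x) = x\log x$ yields the entropy inequality.

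For the variance I would exploit a translation invariance of $C$: if $T_c\mu'(E) := \mu'(E-c)$ then $C_{T_c\mu'} = C_{\mu'}$, so $\mu' \preceq \nu'$ is preserved when $\mu'$ is translated to have mean zero. Since any symmetric decreasing $\nu'$ also has mean zero, the claim $\Var(\mu') \geq \Var(\nu')$ reduces to $\int |x|^2\dd\mu' \geq \int |x|^2\dd\nu'$. The layer-cake identity then gives, for each $\lambda > 0$,
\[
\int (|x|^2 \wedge \lambda)\dd\mu' \;=\; \lambda - \int_0^\lambda \mu'(B_{\sqrt s})\dd s \;\geq\; \lambda - \int_0^\lambda \nu'(B_{\sqrt s})\dd s \;=\; \int (|x|^2 \wedge \lambda)\dd\nu',
\]
where I used that $\nu'$ being symmetric decreasing makes $\nu'(B_r) = C_{\nu'}(|B_r|) \geq C_{\mu'}(|B_r|) \geq \mu'(B_r)$; monotone convergence as $\lambda \to \infty$ closes the argument.

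The main technical subtlety will be in the entropy inequality, since differential entropy on $\R^d$ can be $+\infty$ for heavy-tailed densities, so the majorization $\int f\log f \leq \int g\log g$ must be applied with care at the endpoints. The hypothesis $\int|x|^\beta\dd\mu < \infty$ supplies the needed tail control: it transfers to $\nu$ via the same layer-cake argument, and it propagates to positive times because, for the associated It\^o diffusion $\dd X_t = u(t,X_t)\dd t + \sqrt{2}\dd W_t$, the pathwise bound $|X_t| \leq |X_0| + \|u\|_\infty t + \sqrt{2}\,|W_t|$ ensures $\solP^u_t\mu$ and $\e^{t\Delta}\nu$ have finite $\beta$-moments at every $t > 0$. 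A standard Gaussian-comparison argument then forces both $H(\solP^u_t\mu)$ and $H(\e^{t\Delta}\nu)$ to be finite for $t > 0$, so the HLP bound applies unambiguously.
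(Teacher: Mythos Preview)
Your proposal is correct and matches the paper's approach: reduce via Theorem~\ref{thm:main-result} to $\solP^u_t\mu \preceq \e^{t\Delta}\nu$ with the right-hand side symmetric decreasing, recognize this (together with equal total mass) as Hardy--Littlewood--P\'olya majorization, and then extract the $L^p$ and entropy bounds from convex test functions and the variance bound by testing against radially increasing functions after recentering. The paper packages these majorization steps as Propositions~\ref{prop:convex-dom}, \ref{prop:test-with-increasing-rearrange}, and~\ref{prop:S-Gamma-order}, using the same $\varepsilon$-regularization of $x\log x$ and the same moment-plus-$L^\infty$ criterion (Lemma~\ref{lem:conditions-for-entropy}) for entropy finiteness that you outline.
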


The following corollary is a direct consequence of Corollary~\ref{cor:main-cor} applied with $p=2$ and the energy balance for the advection-diffusion equation:
\[\frac{\dd}{\dd t} \|\solP^u_t \mu\|_{L^2(\R^d)}^2 = -2 \|\nabla \solP^u_t \mu\|_{L^2(\R^d)}^2.\]
\begin{corollary}
\label{cor:second} 
    Suppose $\mu\in\mathcal{M}_+(\R^d)$ is symmetric decreasing and $u$ is a bounded, measurable, divergence-free vector field. Then for all $t\geq 0$
    \[\int_0^t\|\nabla\solP^u_s\mu\|_{L^2(\R^d)}^2\dd s\geq \int_0^t\|\nabla \e^{s\Delta}\mu\|_{L^2(\R^d)}^2\dd s.\]
\end{corollary}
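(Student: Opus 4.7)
The corollary is essentially a one-line consequence of the $p=2$ case of Corollary~\ref{cor:main-cor} combined with the energy balance already displayed in the paper. Since $\mu$ is symmetric decreasing, we have $\mu \preceq \mu$, and Theorem~\ref{thm:main-result} applied with $\nu=\mu$ gives $\solP^u_t \mu \preceq \e^{t\Delta}\mu$ for every $t\geq 0$; the general relations in Section~\ref{sec:proof-of-cor} then yield the pointwise-in-time $L^2$ bound $\|\solP^u_t\mu\|_{L^2}^2 \leq \|\e^{t\Delta}\mu\|_{L^2}^2$.

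For the main computation, I would first treat the case $\mu \in L^2(\R^d)$. The divergence-free condition on $u$ makes the advection term vanish upon testing against $\solP^u_t\mu$, yielding (together with the analogous identity for the heat semigroup) the two energy balances
\[
\tfrac{\dd}{\dd t}\|\solP^u_t\mu\|_{L^2}^2 = -2\|\nabla \solP^u_t\mu\|_{L^2}^2, \qquad \tfrac{\dd}{\dd t}\|\e^{t\Delta}\mu\|_{L^2}^2 = -2\|\nabla \e^{t\Delta}\mu\|_{L^2}^2.
\]
Integrating each on $[0,t]$, the common initial datum $\|\mu\|_{L^2}^2$ cancels when I subtract, leaving
\[
2\int_0^t \bigl(\|\nabla \solP^u_s\mu\|_{L^2}^2 - \|\nabla \e^{s\Delta}\mu\|_{L^2}^2\bigr)\dd s = \|\e^{t\Delta}\mu\|_{L^2}^2 - \|\solP^u_t\mu\|_{L^2}^2,
\]
and the right-hand side is nonnegative by the $L^2$ bound from the previous paragraph. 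This proves the claim when $\mu\in L^2$.

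For general symmetric decreasing $\mu \in \mathcal{M}_+(\R^d)$ (including $\mu$ with a singular part at the origin, so that $\mu \notin L^2$), I would run the same argument on $[\varepsilon,t]$ with initial datum $\solP^u_\varepsilon\mu$ (which lies in $L^2$ by parabolic regularization) and then send $\varepsilon \to 0^+$. Monotone convergence handles the time integrals, while $s \mapsto \|\solP^u_s\mu\|_{L^2}^2$ and $s \mapsto \|\e^{s\Delta}\mu\|_{L^2}^2$ are monotone nonincreasing in $s$ by the very energy identities above, so both $\|\solP^u_\varepsilon\mu\|_{L^2}^2$ and $\|\e^{\varepsilon\Delta}\mu\|_{L^2}^2$ converge (possibly to $+\infty$) to a common value as $\varepsilon\to 0^+$, and cancellation still happens in the subtraction. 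Either both sides of the desired inequality are finite and the proof goes through, or both are $+\infty$ and the statement is trivial. There is no real obstacle; the only mildly delicate point is this $\varepsilon\to 0$ reduction, and even that is avoided if one is willing to restrict to $\mu\in L^2$.
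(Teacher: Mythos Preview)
Your proposal is correct and follows exactly the route the paper indicates: the paper states the corollary as ``a direct consequence of Corollary~\ref{cor:main-cor} applied with $p=2$ and the energy balance,'' which is precisely your argument of integrating the two energy identities and invoking the $L^2$ comparison $\|\solP^u_t\mu\|_{L^2}\le\|\e^{t\Delta}\mu\|_{L^2}$ coming from $\solP^u_t\mu\preceq\e^{t\Delta}\mu$. Your extra discussion of the $\mu\notin L^2$ case (both time integrals then equal $+\infty$) is more than the paper writes out, and while the phrase ``cancellation still happens in the subtraction'' is a bit loose when both terms diverge, your final dichotomy---either $\mu\in L^2$ and the subtraction works, or $\mu\notin L^2$ and both sides are $+\infty$---is the right way to close the argument.
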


We now note that as a direct consequence of Corollary~\ref{cor:main-cor}, using the linearity and positivity preserving properties of the solution operator $\solP^u_t$, we can also comment on initial data that are not necessarily positive.
\begin{corollary}
\label{cor:signed}
    Let $\mu \in \mathcal{M}(\R^d)$, the space of finite signed measures, and let $\mu = \mu_+ - \mu_-$ be its Hahn--Jordan decomposition. Then suppose $\nu_+, \nu_- \in \mathcal{M}_+(\R^d)$ are symmetric decreasing with $\mu_+ \preceq \nu_+$ and $\mu_- \preceq \nu_-$. Let $u$ be a bounded, measurable, divergence-free vector field. Then for all $p \in [1,\infty], t \geq 0$,
    \[\|\solP^u_t \mu\|_{L^p(\R^d)} \leq \Big(\|\e^{t\Delta} \nu_+\|_{L^p(\R^d)}^p + \|\e^{t\Delta} \nu_-\|_{L^p(\R^d)}^p\Big)^{\frac 1p},\]
    where the right-hand side is interpreted as a maximum for $p =\infty$.
\end{corollary}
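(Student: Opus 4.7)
The plan is to exploit the linearity and positivity preservation of the advection--diffusion semigroup $\solP^u_t$ to reduce the signed case to two applications of Corollary~\ref{cor:main-cor}. First I would apply $\solP^u_t$ to the Hahn--Jordan decomposition: by linearity,
\[\solP^u_t \mu \;=\; \solP^u_t \mu_+ - \solP^u_t \mu_-,\]
and by the parabolic maximum principle (or equivalently the It\^o diffusion representation, which is valid since $u$ is bounded and divergence-free), both $\solP^u_t \mu_+$ and $\solP^u_t \mu_-$ are nonnegative and, for $t>0$, are densities with respect to Lebesgue measure.

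For $p \in [1,\infty)$, I would then invoke the elementary pointwise inequality $|a-b|^p \leq \max(a,b)^p \leq a^p + b^p$, valid for all $a,b \geq 0$, with $a = \solP^u_t \mu_+(x)$ and $b = \solP^u_t \mu_-(x)$. Integrating in $x$ and then applying the $L^p$ portion of Corollary~\ref{cor:main-cor} to each of the hypotheses $\mu_\pm \preceq \nu_\pm$ gives
\[\|\solP^u_t \mu\|_{L^p}^p \;\leq\; \|\solP^u_t \mu_+\|_{L^p}^p + \|\solP^u_t \mu_-\|_{L^p}^p \;\leq\; \|\e^{t\Delta}\nu_+\|_{L^p}^p + \|\e^{t\Delta}\nu_-\|_{L^p}^p,\]
and taking $p$th roots yields the claim. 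For $p = \infty$, the same scheme works with the sharper bound $|a-b| \leq \max(a,b)$ and the $L^\infty$ case of Corollary~\ref{cor:main-cor}, producing the stated bound read as a maximum.

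There is no real obstacle, as the preamble to the statement already signals. The only minor point worth tracking is that $\mu_\pm$ and $\nu_\pm$ need not have equal total mass, nor be probability measures; however, the $L^p$ bound used is itself a consequence of the concentration ordering $\solP^u_t \mu_\pm \preceq \e^{t\Delta} \nu_\pm$ delivered by Theorem~\ref{thm:main-result}, and that ordering, together with its passage to $L^p$ norms, is insensitive to normalization. Hence the whole argument really is a one-line deduction from linearity, positivity preservation, and Corollary~\ref{cor:main-cor}, with no rescaling or additional compactness arguments needed.
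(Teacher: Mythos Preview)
Your approach is correct and matches the paper's own: the paper does not give a detailed proof, merely noting that the corollary is ``a direct consequence of Corollary~\ref{cor:main-cor}, using the linearity and positivity preserving properties of the solution operator $\solP^u_t$,'' which is exactly the scheme you outline. Your observation that the probability-measure hypothesis in Corollary~\ref{cor:main-cor} is inessential for the $L^p$ inequality---since the concentration ordering from Theorem~\ref{thm:main-result} is stated for general positive finite measures and the passage to $L^p$ norms via Proposition~\ref{prop:convex-dom} (with $F(t)=t^p$, $F'(0)=0$ for $p>1$) does not use the normalization---is a valid way to handle that wrinkle.
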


It is also a direct consequence of Corollary~\ref{cor:main-cor} that we find the following inequality for solutions to the two-dimensional Navier--Stokes equation with vorticity of fixed sign. We note that initial conditions are chosen so that, by Sobolev embedding, $\nabla^\perp (-\Delta)^{-1} \omega \in L^\infty_{t,x}$ so that we can apply the above results.

\begin{corollary}
\label{cor:third}
    If $\omega$ is a solution to the vorticity form of the two-dimensional Navier--Stokes equation
    \[
    \begin{cases}
    \partial_t\omega-\Delta\omega+\nabla^{\perp}(-\Delta)^{-1}\omega\cdot\nabla\omega=0\\
    \omega(0)=\omega_0
    \end{cases}
    \]
    where $\omega_0 \in L^q(\R^2) \cap L^1_+(\R^2)$ for some $q>2$ is symmetric decreasing, then for all $t>0$ and $p\in[1,\infty]$
    \[\|\omega(t,\,\cdot\,)\|_{L^p(\R^2)}\leq \|\e^{t\Delta}\omega_0\|_{L^p(\R^2)}\quad \text{and}\quad \int_0^t \|\nabla \omega(s,\,\cdot\,)\|_{L^2(\R^2)}^2\dd s \geq \int_0^t \|\nabla \e^{s\Delta} \omega_0\|^2_{L^2(\R^2)}\dd s.\]
\end{corollary}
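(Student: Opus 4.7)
The plan is to recast the 2D vorticity equation as an advection-diffusion equation driven by the divergence-free velocity $u := \nabla^\perp(-\Delta)^{-1}\omega$ and then invoke Corollaries~\ref{cor:main-cor} and~\ref{cor:second} with $\mu = \nu = \omega_0$ (which is symmetric decreasing, so $\omega_0 \preceq \omega_0$ holds trivially). The divergence-free property is immediate since $u = \nabla^\perp\psi$ with stream function $\psi := (-\Delta)^{-1}\omega$, so the only substantive hypothesis left to verify is that $u \in L^\infty_{t,x}$.

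To establish the $L^\infty$ bound, I would use the Biot--Savart representation $u(t,\,\cdot\,) = K \ast \omega(t,\,\cdot\,)$ with $K(x) = \tfrac{1}{2\pi}\,x^\perp/|x|^2$, which satisfies $|K(x)| \lesssim 1/|x|$. Splitting $K = K\indc_{B_R} + K\indc_{B_R^c}$ and applying Young's convolution inequality gives
\[
\|u(t,\,\cdot\,)\|_{L^\infty} \lesssim R^{2/q'-1}\|\omega(t,\,\cdot\,)\|_{L^q} + R^{-1}\|\omega(t,\,\cdot\,)\|_{L^1},
\]
where $q' = q/(q-1) \in (1,2)$ since $q > 2$ (which is exactly what ensures $K\indc_{B_R} \in L^{q'}$); optimizing over $R$ yields the interpolation bound $\|u(t,\,\cdot\,)\|_{L^\infty} \lesssim \|\omega(t,\,\cdot\,)\|_{L^1}^\alpha\|\omega(t,\,\cdot\,)\|_{L^q}^{1-\alpha}$ for some $\alpha \in (0,1)$. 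To verify the right-hand side is uniformly bounded in~$t$, I would note that the parabolic maximum principle preserves positivity of $\omega$, so integrating in space gives $\|\omega(t,\,\cdot\,)\|_{L^1} = \|\omega_0\|_{L^1}$, and the standard $L^q$ energy estimate---in which the drift term vanishes by integration by parts since $\nabla\cdot u = 0$---shows $\|\omega(t,\,\cdot\,)\|_{L^q}$ is non-increasing.

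With $u$ verified to be bounded, measurable, and divergence-free, Corollary~\ref{cor:main-cor} applied with $\mu = \nu = \omega_0$ (after dividing by $\|\omega_0\|_{L^1}$ to obtain probability measures, noting that both sides of the $L^p$ inequality are positively homogeneous in the initial condition) yields the claimed $L^p$ bound, and Corollary~\ref{cor:second} yields the time-integrated gradient inequality. The only real obstacle is the Biot--Savart $L^\infty$ estimate, but this is a routine estimate and the hypothesis $q > 2$ is chosen precisely to make it available.
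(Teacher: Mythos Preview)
Your proposal is correct and follows essentially the same approach as the paper: the authors simply remark that Corollary~\ref{cor:third} is a direct consequence of Corollary~\ref{cor:main-cor} (together with Corollary~\ref{cor:second} for the gradient inequality), once one notes that the hypothesis $\omega_0\in L^q\cap L^1_+$ with $q>2$ guarantees $u=\nabla^\perp(-\Delta)^{-1}\omega\in L^\infty_{t,x}$. The only cosmetic difference is that the paper invokes Sobolev embedding for this bound, whereas you spell out the equivalent Biot--Savart kernel splitting; your additional details (positivity preservation, $L^1$ conservation, $L^q$ decay, normalization to probability measures) are all correct and fill in what the paper leaves implicit.
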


Our final main result is that the above inequalities fail on the torus $\T^d$. That is, there exists a divergence-free vector field $u$ which (in some sense) increases the concentration of the solution to the advection-diffusion equation compared to the solution to the heat equation.

\begin{theorem}
\label{thm:torus}
    There exists a divergence-free vector field $u : [0,\infty)\times \T^d \to \R^d$ such that $u \in C^\infty_{t,x}$ with compact support in time and for some $T>0$ and all $t \geq T$, we have that
    \begin{equation}
    \label{eq:L2-torus-inversion}
    \|\solP^u_t \delta_0\|_{L^2(\T^d)} > \|\e^{t\Delta} \delta_0\|_{L^2(\T^d)},
    \end{equation}
    where $\delta_0 \in \mathcal{M}_+(\T^d)$ is the delta mass at $0$. In particular, for all $t > T$, we have
    \begin{equation}
    \label{eq:torus-not-preceq}
    \solP^u_t \delta_0\not \preceq \e^{t\Delta} \delta_0.
    \end{equation}
\end{theorem}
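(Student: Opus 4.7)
The plan is to reduce, via the spectral gap of $-\Delta$ on $\T^d$, to a statement about the $|k|=1$ Fourier coefficients of $\solP^u_{T_0}\delta_0$, and then to construct $u$ as a small, brief pulse of divergence-free flow that boosts these coefficients above their heat-only value. Since $u$ has compact time support in some $[0,T_0]$, we have $\solP^u_t \delta_0 = \e^{(t-T_0)\Delta} g$ for $t\ge T_0$, with $g := \solP^u_{T_0}\delta_0$. Writing both $L^2$ norms on $\T^d$ via Parseval and cancelling the conserved $k=0$ terms, as $t\to\infty$ the difference $\|\solP^u_t\delta_0\|_{L^2}^2 - \|\e^{t\Delta}\delta_0\|_{L^2}^2$ is dominated by the $|k|=1$ modes. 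Hence \eqref{eq:L2-torus-inversion} for all large $t$ follows from the strict Fourier inequality $\sum_{|k|=1}|\hat g(k)|^2 > e^{-2T_0}\sum_{|k|=1}|\hat \delta_0(k)|^2$, and \eqref{eq:torus-not-preceq} is immediate from \eqref{eq:L2-torus-inversion} via the layer-cake representation of $\|\cdot\|_{L^2}^2$.

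To arrange the Fourier inequality, take $u(t,x) = \epsilon\,\phi(t) V(x)$ where $\phi \ge 0$ is smooth with $\int\phi = 1$ and $\mathrm{supp}\,\phi \subset [t_0, t_0+\tau]$ for small $t_0,\tau,\epsilon>0$, and $V$ is smooth and divergence-free. Set $f_1 := \e^{t_0\Delta}\delta_0$. A short Duhamel expansion yields
\[
\solP^u_{t_0+\tau}\delta_0 \;=\; \e^{\tau\Delta} f_1 \;-\; \epsilon\, V\cdot \nabla f_1 \;+\; O(\epsilon^2 + \epsilon\tau).
\]
Using $P_1 f_1 = 2(2\pi)^{-d} e^{-t_0}\sum_j \cos(x_j)$ (with $P_1$ the $L^2$-projection onto $\mathrm{span}\{e^{i k\cdot x}: |k|=1\}$) together with an integration by parts invoking $\nabla\cdot V = 0$,
\[
\langle P_1 f_1,\, V\cdot\nabla f_1 \rangle_{L^2} \;=\; 2(2\pi)^{-d}e^{-t_0}\int_{\T^d} f_1\, V\cdot S\,\dd x, \qquad S := (\sin x_1, \ldots, \sin x_d).
\]
Substituting back, the excess of $\sum_{|k|=1}|\widehat{\solP^u_{t_0+\tau}\delta_0}(k)|^2$ over the heat-only value $e^{-2\tau}\sum_{|k|=1}|\hat f_1(k)|^2$ equals, to leading order in $\epsilon$, a positive constant times $-\epsilon\int_{\T^d} f_1\, V\cdot S\,\dd x$.

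We complete the argument by exhibiting a smooth divergence-free $V$ for which $\int_{\T^d} f_1\, V\cdot S\,\dd x \ne 0$; negating $V$ if needed and taking $\epsilon$ small enough relative to $t_0$ and $\tau$ then secures the strict Fourier inequality, and hence \eqref{eq:L2-torus-inversion} for all $t$ beyond some $T$. Although $S = -\nabla(\sum_j \cos x_j)$ is a pure gradient, so that $\int V\cdot S\,\dd x = 0$ for every divergence-free $V$, the weighted field $f_1 S$ is not a gradient: for the product heat kernel $f_1 = \prod_j g(x_j)$, the closedness condition $\partial_k(f_1\sin x_j) = \partial_j(f_1\sin x_k)$ would force $g'(x)/(g(x)\sin x)$ to be constant in $x$, which fails for the 1D torus heat kernel. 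Thus $f_1 S$ has a nonzero divergence-free component, against which some smooth divergence-free $V$ pairs nontrivially; in $d=2$ an explicit example is $V = \nabla^\perp\bigl(\sin(2x_1)\cos(x_2)\bigr)$. The main technical obstacle is that highly symmetric $V$'s can cause $\int f_1\, V\cdot S\,\dd x$ to vanish to several orders in $t_0$ (for the $d=2$ witness above, the leading nonzero term is of order $t_0^3$), so one must either invoke the abstract gradient-obstruction argument or push the explicit Fourier expansion to the required order.
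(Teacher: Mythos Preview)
Your high-level strategy coincides with the paper's: a brief divergence-free pulse, then free heat flow, with the spectral gap on~$\T^d$ reducing~\eqref{eq:L2-torus-inversion} at large times to a strict inequality on the $|k|=1$ Fourier mass at the end of the pulse. The paper implements this by applying the pulse at a \emph{large} time~$T$ and computing the right time-derivative of $\sum_{|k|=1}|\hat\psi|^2$ explicitly in Fourier space, with the concrete field $V=\nabla^\perp\cos(x_1+2x_2)$ (in $d=2$). Your Duhamel-plus-Helmholtz route is more conceptual: you identify the leading correction as $-\epsilon\int f_1\,V\cdot S$ and argue abstractly that $f_1 S$ is not closed (hence not a gradient), so some divergence-free~$V$ pairs with it nontrivially. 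That abstract argument is correct and in fact sidesteps the ``orders in $t_0$'' issue entirely, since it works at any fixed $t_0>0$.

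Your explicit witness, however, is wrong. With $\psi=\sin(2x_1)\cos(x_2)$ the resulting $V_1,V_2$ have parities such that every term of $f_1\,V\cdot S$ is odd in~$x_2$; since $f_1$ is even in each coordinate the integral vanishes \emph{identically}, not merely to finite order in~$t_0$. For $\int f_1\,V\cdot S$ to survive, the stream function must be odd in both variables. Taking $\psi=\sin(x_1)\sin(2x_2)$---which is exactly the odd--odd part of the paper's $\cos(x_1+2x_2)$---gives
\[
\int_{\T^d} f_1\,V\cdot S\,\dd x \;=\; \tfrac{1}{2}e^{-2t_0}-e^{-4t_0}+e^{-8t_0}-\tfrac{1}{2}e^{-10t_0},
\]
which is $8t_0^3+O(t_0^4)$ for small~$t_0$ and $\sim\tfrac{1}{2}e^{-2t_0}$ for large~$t_0$, matching your $O(t_0^3)$ claim. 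With this correction, or relying solely on your gradient-obstruction argument, the proof goes through.
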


\paragraph*{Acknowledgments}
We thank Almut Burchard and Gautam Iyer for their helpful suggestions. 
EHC was partially supported by NSF grant DMS-2342349. RR was partially funded by the Fonds de recherche du Qu\'ebec (section Nature et technologies). KR was partially supported by the NSF Collaborative Research Grant DMS-2307681 and the Simons Foundation through the Simons Investigators program.

\section{Discussion and background}

As is discussed in Subsection~\ref{ss:related}, similar objects---as well as similar arguments---have been used to study similar problems previously. However, the specific setting of drift-diffusion equations with divergence-free drifts---of substantial importance to the study of passive scalar turbulence---has so far not seen much attention. Despite the simplicity of the argument for an expert in rearrangements, these techniques are less familiar to the mathematical fluids community. As such, we provide a thorough exposition of the relevant tools for this specific problem, despite many of the lemmas being elementary and having previously appeared in various forms throughout the literature on rearrangements.

\subsection{Motivation and interpretation of results}\label{subseq:motivation}

Consider the It\^o diffusion on $\R^d$ with drift $u :[0,\infty) \times \R^d\to \R^d$ with $u \in L^\infty_{t,x},$
\begin{equation}
\label{eq:ito-diff}
\dd X_t = u(t,X_t)\dd t + \sqrt{2} \dd W_t.
\end{equation}
This admits a unique strong solution due to stochastic regularization, see~\cite{Ve81,Ve83,Da07}. In complete generality, the drift can either substantially increase or decrease the variance of $X_t$ compared to the case of no drift given by $\sqrt{2} W_t$. The clearest examples are given by drifts with non-trivial divergence. In particular, consider
\[u^{\mathrm{c}}(x) = - \frac{x}{|x|} \text{ or } u^{\mathrm{r}}(x) = \frac{x}{|x|}.\]
Taking $u(t,x) = u^{\mathrm{c}}(x)$ and $X_0=0$, we see that the law of $X_t$ converges to a stationary measure which has all moments. As such, $\Var(X_t) \to C <\infty$, so the drift strongly suppress variance growth of $X_t$ at long times compared to $\Var(\sqrt{2} W_t) =2dt.$ On the other hand, taking $u(t,x) = u^{\mathrm{r}}(x)$, we have at long times $\Var(X_t)\approx t^2 \gg \Var(\sqrt{2} W_t)$.

Thus we see that drifts with negative divergence tend to be concentrating and suppress variance growth while drifts with positive divergence tend to be spreading and enhance variance growth. It is thus natural to ask the effect of divergence-free drifts. In this case, the dynamics are more subtle as we can no longer cause pure spreading solely due to the drift: the flow map of the drift is volume-preserving. However, it is certainly still possible to have enhancement of variance growth due to the presence of a divergence-free drift. In 2D, taking $u(t,x) = (0,\lambda \sin(x_1))$, at long times, $\Var(X_t) \approx \lambda^2 t \gg \Var(\sqrt{2} W_t)$ provided $\lambda$ is sufficiently large. 

In fact, the variance of the process defined by~\eqref{eq:ito-diff} can be quantitatively related to the dissipation of the advection-diffusion operator $\solP^u_t$ by a fluctuation-dissipation relation~\cite{drivas_lagrangian_2017,drivas_lagrangian_2017-1}. This connection, together with the variance growth enhancing properties of divergence-free flows, has been used to study enhanced dissipation~\cite{zelati_stochastic_2021} as well as anomalous dissipation~\cite{johansson_anomalous_2024}. In the other direction, there are myriad examples of enhanced dissipation~\cite{constantin_diffusion_2008,bedrossian_enhanced_2017,feng_dissipation_2019,zelati_relation_2020,albritton_enhanced_2022,coti_zelati_enhanced_2023,villringer_enhanced_2024,bedrossian_almost-sure_2021,cooperman_harris_2024} and anomalous dissipation~\cite{drivas_anomalous_2022,colombo_anomalous_2023,armstrong_anomalous_2025,burczak_anomalous_2023,elgindi_norm_2024,rowan_anomalous_2024,agresti_anomalous_2024,hess-childs_universal_2025} that are proven using other techniques but through the fluctuation-dissipation relation provide examples of divergence-free flows that enhance the variance growth of $X_t$. The capacity for divergence-free vector fields to enhance the variance growth of $X_t$ has thus been well utilized. We are then motivated to ask: \textit{can a divergence-free drift \textbf{decrease} the variance of $X_t$} compared to $\sqrt{2} W_t$?

The initial answer to this question is \textit{yes, in the case that $X_0$ has a nondeterministic distribution}. In that case, we naturally compare $X_t$ to the evolution under pure diffusion, given by $X_0 + \sqrt{2}W_t$. In particular, consider taking the law of $X_0$ to be $\frac{1}{2}\delta_{(-R,0)}+\frac{1}{2}\delta_{(R,0)}$ for some large $R$. Then consider some divergence-free truncation $\tilde u(x,y)$ of 
\[u(x) = (-x_1, x_2),\]
so that $\tilde u$ is bounded and agrees with $u$ on $B_{2R}$. For short times, $\tilde u$ contracts $X_t$ towards the origin, decreasing the variance, before the action of $\tilde u$ in the second coordinate begins to increase the variance again.

However, in the case that~$X_0$ is deterministic---or as is more generally considered in this paper, $X_0$ has a symmetric decreasing law---the possibility of the drift causing the variance of~$X_t$ to be smaller than the variance of the pure diffusive process $X_0 + \sqrt{2}W_t$ is less clear. The answer provided in this work is that \textit{a divergence-free drift always increases the variance of $X_t$} compared to $\sqrt{2}W_t +X_0$, given the correct assumptions on the law of~$X_0.$

We further show that all the $L^p$ norms of the law of $X_t$---which is described by $\solP^u_t$ applied to the law of $X_0$---are less than the $L^p$ norms of the law of pure diffusive process $X_0 + \sqrt{2}W_t$. Additionally, we show that the entropy of $X_t$ is greater than that of $X_0 + \sqrt{2} W_t$. These are other manifestations of the loose statement \textit{divergence-free drifts always make the law of~$X_t$ ``less concentrated''.}

This imprecise sentiment is given its most precise form in Theorem~\ref{thm:main-result}, as the modulus of continuity $C_{\mathrm{Law}(X_t)}(\alpha)$ contains effectively all the information on how concentrated the law of $X_t$ is, though the statements of Corollary~\ref{cor:main-cor} are perhaps clearer.

There is an interesting comparison between this line of inquiry and that pursued in~\cite{miles_diffusion-limited_2018}, in which they consider the extent to which diffusion can suppress the mixing effect of the advective term. In general, mixing is strongly suppressed on very small length scales due to the smoothing effect of the diffusion; one version of this phenomenon is proven in~\cite{hairer_lower_2024}. We are however pursuing the opposite question: can (divergence-free) advection suppress the diffusion? And we provide the negative answer: under mild conditions, it cannot.

Finally, we remark on Theorem~\ref{thm:torus}, which says that when we take our domain to be $\T^d$ instead of $\R^d$, we can have divergence-free advection in some sense suppress the diffusion. In particular, there exists a flow for which the fundamental solution to the advection-diffusion equation has larger $L^2$ norms at long times when compared to the $L^2$ norm of the heat kernel. The mechanism for this is that rotation symmetries are broken on the torus. In particular, the level sets of the heat kernel at long times are highly non-circular. Thus there exists divergence-free flows which decrease the measure of the level sets (while necessarily preserving the measure of the super-level sets). By flowing the super-level sets toward being balls, this advecting flow will decrease the effect of diffusion at future times---essentially due to the isoperimetric inequality. We give an explicit example that exhibits this that takes advantage of the exact behavior of Fourier modes of the solution. We note however that, for example by using PDE arguments, one can still show inequalities similar to those of Corollary~\ref{cor:main-cor} when the domain is~$\T^d$ instead of~$\R^d$, it is just the inequalities \textit{will no longer be true without a prefactor constant.}

\subsection{Symmetric decreasing rearrangements}

For the convenience of the reader, we recall the definition of a symmetric decreasing rearrangement as well as the pivotal \textit{Riesz rearrangement inequality}. Before we do this, we recall that for an arbitrary measurable nonnegative function $f: \R^d \to [0,\infty)$, the following \emph{layer-cake} representation holds:
\[f(x) = \int_0^\infty \indc_{\{f > t\}}(x)\dd t\]
for all $x\in\R^d$.

\begin{definition}
    For a Borel set $E \subseteq \R^d$, the \textit{symmetric rearrangement of $E$}, denoted $E^*$, is the set
    \[E^* := B_r \text{ where } |B_r| = |E|.\]
    Then the \textit{symmetric decreasing rearrangement of $f$}, denoted $f^*$, is the function
    \[f^*(x) := \int_0^\infty \indc_{\{f>t\}^*}(x)\dd t.\]
    For a general positive finite measure $\mu \in \mathcal{M}_+(\R^d)$ with Lebesgue decomposition
    $\mu(\dd x) = f(x)\dd x + \beta(\dd x)$,
    we define the \textit{symmetric decreasing rearrangement of $\mu$}, denoted $\mu^*$, by
    \[\mu^*(\dd x) := f^*(x)\dd x + \beta(\R^d)\delta_0(\dd x).\]
\end{definition}

We note that $f^*$ is defined precisely so that $f^*$ is symmetric decreasing in the sense of Definition~\ref{def:symmetric-decreasing} and
$|\{f>t\}| = |\{f^*>t\}|$ for all $t>0$. We also note---as is made clear by Lemma~\ref{lem:concentration-representation}---that the symmetric rearrangement gives the minimal symmetric decreasing measure $\mu^*$ such that $\mu \preceq \mu^*$.   
The main result we will need about symmetric decreasing rearrangements is the following (for a detailed exposition, see~\cite[Chapter 3]{lieb_analysis_2001}).

\begin{theorem}[Riesz rearrangement inequality]
\label{thm:riesz-rearrangement}
    Let $f,g,h : \R^d \to [0,\infty)$ be measurable functions with $f,g,h \in L^1(\R^d)$. Then 
    \begin{equation}
        \label{eq:riesz-ineq}
    \iint f(x) g(x-y) h(y)\dd x \dd y \leq \iint f^*(x) g^*(x-y) h^*(y)\dd x \dd y.
    \end{equation}
    As a limiting case,
    \begin{equation}
        \label{eq:rearrange-test}
    \int f(x) g(x)\dd x \leq \int f^*(x)g^*(x)\dd x.
    \end{equation}
\end{theorem}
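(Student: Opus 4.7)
My plan is to reduce~\eqref{eq:riesz-ineq} to its set-indicator version via layer-cake and then establish the set version by Steiner symmetrization, following the classical approach. First, applying the layer-cake representation to each of $f, g, h$ and interchanging integrals by Fubini's theorem, the left-hand side of~\eqref{eq:riesz-ineq} becomes
\begin{equation*}
\int_0^\infty\!\!\int_0^\infty\!\!\int_0^\infty I\bigl(\{f>s\},\{g>t\},\{h>u\}\bigr)\,\dd s\,\dd t\,\dd u,
\end{equation*}
where $I(A,B,C) := \iint \indc_A(x)\,\indc_B(x-y)\,\indc_C(y)\,\dd x\,\dd y$. The right-hand side admits the same representation in terms of $\{f^*>s\}, \{g^*>t\}, \{h^*>u\}$, which coincide with $\{f>s\}^*, \{g>t\}^*, \{h>u\}^*$ by construction. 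Hence it suffices to prove the \emph{Riesz--Sobolev} inequality for sets of finite measure: $I(A,B,C)\leq I(A^*,B^*,C^*)$.

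For the set version, I would employ Steiner symmetrization. For a hyperplane $H\subset\R^d$ through the origin, let $A^H$ denote the set obtained from $A$ by replacing each line-fibre orthogonal to $H$ with the centered interval of the same one-dimensional Lebesgue measure. By Fubini, the monotonicity
\begin{equation*}
I(A,B,C)\leq I(A^H,B^H,C^H)
\end{equation*}
reduces to the analogous statement in one dimension, which I would prove by first checking the case where $A, B, C\subseteq\R$ are single intervals (a direct computation showing that the functional is maximized when all three are centered at the origin) and then extending to arbitrary measurable sets by approximation with finite unions of intervals together with a continuous sliding argument. Iterating Steiner symmetrizations along a family of hyperplanes whose normals are dense in $S^{d-1}$, the symmetrized sets converge in symmetric difference to the symmetric decreasing rearrangements $A^*, B^*, C^*$; continuity of $I$ with respect to $L^1$ convergence of its indicator arguments then yields $I(A,B,C)\leq I(A^*,B^*,C^*)$, which combined with the layer-cake decomposition gives~\eqref{eq:riesz-ineq}.

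The hardest part, in my view, is justifying that iterated Steiner symmetrizations actually converge to the symmetric decreasing rearrangement: one needs either a compactness argument in the space of measurable sets of fixed finite measure, or to identify the limit by showing that any accumulation point must be invariant under a sufficiently rich family of rotations and hence radial. A secondary subtle point is the sliding argument in one dimension for arbitrary measurable sets, rather than just unions of intervals. Finally, the limiting inequality~\eqref{eq:rearrange-test} can be derived independently by the simpler layer-cake identity
\begin{equation*}
\int f(x)g(x)\,\dd x = \int_0^\infty\!\!\int_0^\infty \bigl|\{f>s\}\cap\{g>t\}\bigr|\,\dd s\,\dd t,
\end{equation*}
combined with the elementary bound $|A\cap B|\leq \min(|A|,|B|) = |A^*\cap B^*|$, which holds because $A^*$ and $B^*$ are concentric balls; alternatively, it can be recovered from~\eqref{eq:riesz-ineq} by taking $g$ there to be an approximate identity at the origin.
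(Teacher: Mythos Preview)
The paper does not supply its own proof of this theorem; it states the Riesz rearrangement inequality as a known result and defers to \cite[Chapter~3]{lieb_analysis_2001} for a detailed exposition. Your proposal is a correct outline of precisely that classical proof: layer-cake reduction to the set version, Steiner symmetrization to reduce to one dimension, and iteration of Steiner symmetrizations along a dense family of directions to reach the radial rearrangement. You have also correctly flagged the genuinely delicate step (convergence of iterated Steiner symmetrizations to the ball), and your independent derivation of~\eqref{eq:rearrange-test} via $|A\cap B|\le\min(|A|,|B|)=|A^*\cap B^*|$ is the standard Hardy--Littlewood argument. There is nothing to correct here; your sketch matches the reference the paper cites.
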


\subsection{Related literature on PDE and rearrangements}
\label{ss:related}

Symmetric rearrangements are a widely used tool for proving sharp estimates for elliptic and parabolic equations. In the seminal work of Talenti~\cite{talenti_elliptic_1976}, it was shown that if $-\Delta u=f$ and $-\Delta v=f^*$ then $u^*\leq v$. This form of a comparison principle has been generalized considerably to a variety of elliptic equations~\cite{talenti_nonlinear_1979,alvino_convex_1997,alvino_talenti_2023}, as well as analogous comparison principles for parabolic equations~\cite{bandle_symmetrizations_1976,de_bonis_symmetrization_2024,ferone_symmetrization_2024}. The latter often give estimates on the same modulus of absolute continuity used here.

Of particular relevance to the present work are~\cite{alvino_comparison_1990} and~\cite{diaz_steiner_2015}. In the former, inequalities are given between solutions of very general parabolic equations and appropriately symmetrized equations. Although similar to the main result of this paper, they do not specifically consider equations with divergence-free drifts. For this reason, their results must handle the most ``pessimistic'' case in which the drift generates substantial mass. This is impossible in the divergence-free setting, and thus their results do not meaningfully overlap with ours. In~\cite{diaz_steiner_2015}, they prove inequalities for concave semilinear parabolic equations. Their proof proceeds similarly to ours, using the Trotter product formula to decompose the parabolic operator into products of purely diffusive and purely absorptive operators, however, their setting and motivation are different.

Rearrangements also play a fundamental role in the study of two-dimensional fluid equations. Building on the seminal works of Arnold~\cite{arnold_sur_1966,arnold_sur_1966-1} and Benjamin~\cite{benjamin_alliance_1976}, Burton~\cite{burton_rearrangements_1987,burton_steady_1988} established the existence of steady-state solutions to the two-dimensional Euler equations by extremizing energy over classes of rearrangements. These methods have also been instrumental in proving stability results for such solutions~\cite{burton_global_2005,burton_nonlinear_2013,burton_compactness_2021}. More recently, similar ideas have been extended to analyze the stability of the two-dimensional Navier–-Stokes equation as a perturbation of Euler flows, see~\cite{gallay_arnolds_2024}.

Finally, in~\cite{jin_sharp_2024}, the authors give \textit{a priori} bounds on the modulus of absolute continuity for the vorticity of solutions to the two-dimensional Navier--Stokes equation with external forcing. The tools used for these estimates are similar to what is used in the present work, again using operator splitting. However, their primary motivation is to establish sharp conditions for energy balance in the presence of external forces rather than comparing solutions, leading to different conclusions.

\subsection{Overview of the argument}
\label{ss:overview}

There are two main ideas used in the argument of Theorem~\ref{thm:main-result}. The first is to study the modulus of absolute continuity with respect to the Lebesgue measure~\eqref{eq:C-mu-def}. Our primary motivation for this problem is to understand the behavior of variances and $L^p$ norms. However, directly studying these objects is less straightforward than studying the modulus of absolute continuity. In Section~\ref{sec:proof-of-cor}, we show how our statement about the modulus of absolute continuity directly gives the relevant facts about variances, entropies, and $L^p$ norms.

The second idea is to use operator splitting to study the evolution of the modulus of absolute continuity $C_\mu$. That is, for suitably regular advecting flows $u$, we can approximate the advection-diffusion operator by a pulsed diffusion, in which the advective and diffusive steps are alternated. The advantage of splitting the operator in this way is that, since the flow is divergence-free, the advective step \textit{leaves the modulus of continuity unchanged}. As such, we only need that the diffusive step is order-preserving under the relation $\preceq,$ which is provided by Proposition~\ref{prop:heat-increases-diffuseness}. It is there that we are essentially using that the measure we are comparing to, $\nu$, is symmetric decreasing and thus so is $\e^{t\Delta} \nu$ for $t>0.$ Iterating these two simple observations allows us to conclude a version of Theorem~\ref{thm:main-result} for the split operator and a special class of advecting flows. Using that the split operator converges to the advection-diffusion operator, and then that the general advecting flows are well-approximated by the special class of advecting flows, we are then able to conclude the general statement. 

Before proceeding to the proofs, we remark on the strictness of the inequality in Theorem~\ref{thm:main-result}. We have, under suitable assumptions, that for all $\alpha \geq 0$,
\[C_{\solP^u_t \mu}(\alpha) \leq C_{\e^{t\Delta}\nu}(\alpha).\]
It is natural however to ask if, under suitable non-degeneracy assumptions on the advecting flow~$u$, one can in fact make this inequality strict. That is, that there must exist some $\alpha>0$ such that
\[C_{\solP^u_t\mu}(\alpha)<C_{e^{t\Delta}\nu}(\alpha).\]
Upon inspection of the proof of Theorem~\ref{thm:main-result}, one can see that if $C_\mu(\alpha)<C_\nu(\alpha)$ for \textit{any} $\alpha\geq 0$, then at future times $t$
\[\forall\alpha>0,\quad C_{\solP_t^u\mu}(\alpha)<C_{e^{t\Delta}\nu}(\alpha).\] Thus, the only way for strictness to fail at some future time $t$ is if it also fails for all previous times. The natural conjecture is that strictness fails up to time $t$ if and only if $\mu$ is a translate of $\nu$ and $u$ exclusively generates solid body transformations.

\begin{conjecture}\label{conjecture}
    Suppose $u$ is a divergence-free vector field, $\mu\preceq \nu$, and $\nu$ is symmetric decreasing. Then, the following are equivalent: 
    \begin{enumerate}
        \item\label{item:i} we have $e^{s\Delta}\nu\preceq\solP_s^u\mu$ for all $s\in[0,t]$;
        \item\label{item:ii} $\mu$ is a translate of $\nu$, and there exists $v:[0,t]\rightarrow\R^d$ and $A:[0,t]\rightarrow \R^{d\times d}$ with the property that $u(s,x)=v(s)+A(s)x$ and $A(s)^\top = - A(s)$ for all $s\in[0,t]$ and $x\in\R^d$.
    \end{enumerate}
\end{conjecture}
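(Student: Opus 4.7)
The plan is to prove both implications, the forward one being a direct conjugation argument and the reverse one requiring a rigidity analysis that, as I will explain, runs into a genuine difficulty.

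For (\ref{item:ii}) $\Rightarrow$ (\ref{item:i}): the flow $\Phi_s$ generated by $u(s,x)=v(s)+A(s)x$ with $A(s)^{\top}=-A(s)$ is a time-dependent rigid motion of $\R^d$, hence an isometry at each time. Since isometries intertwine the Laplacian, a straightforward chain-rule check shows that $\tilde{\rho}(s,x):=\solP_s^u\mu(\Phi_s(x))$ solves the pure heat equation with initial datum $\mu$, so $\solP_s^u\mu=\e^{s\Delta}\mu\circ\Phi_s^{-1}$. Because $\Phi_s$ preserves the Lebesgue measure, the concentration profile satisfies $C_{\solP_s^u\mu}=C_{\e^{s\Delta}\mu}$. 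If $\mu=\nu(\,\cdot\,-a)$, then $\e^{s\Delta}\mu$ is the same translate of $\e^{s\Delta}\nu$, so $C_{\e^{s\Delta}\mu}=C_{\e^{s\Delta}\nu}$, yielding $\e^{s\Delta}\nu\preceq\solP_s^u\mu$ for all $s\in[0,t]$.

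For (\ref{item:i}) $\Rightarrow$ (\ref{item:ii}): combining (\ref{item:i}) with Theorem~\ref{thm:main-result} gives $C_{\solP_s^u\mu}=C_{\e^{s\Delta}\nu}$ on $[0,t]$, equivalently $(\solP_s^u\mu)^{\ast}=\e^{s\Delta}\nu$. For any $s>0$, $\e^{s\Delta}\nu$ is analytic, strictly positive, and strictly radially decreasing, in particular with each level set of zero measure. I would then argue that $\rho(s,\,\cdot\,):=\solP_s^u\mu$ must itself be a translate $\e^{s\Delta}\nu(\,\cdot\,-c(s))$ for a continuous path $c$; the natural route is an equality-case analysis of the operator-splitting scheme used to prove Theorem~\ref{thm:main-result}, tracking where the heat step strictly reduces concentration and showing the only way this can fail uniformly in time is if the iterate is symmetric-decreasing about some center before each diffusion substep. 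Taking $s\to 0^{+}$ then gives $\mu=\nu(\,\cdot\,-c(0))$. Substituting the ansatz $\rho(s,x)=\e^{s\Delta}\nu(x-c(s))$ into $\partial_s\rho=\Delta\rho-u\cdot\nabla\rho$ and using that $\e^{s\Delta}\nu$ already solves the heat equation gives the pointwise identity
\[
(u(s,x)-c'(s))\cdot\nabla\rho(s,x)=0\qquad\text{for a.e. }s\in[0,t],\ x\neq c(s),
\]
so that $u(s,\,\cdot\,)-c'(s)$ is tangent to every sphere centered at $c(s)$.

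The main obstacle is extracting the affine antisymmetric form of $u$ from this tangency condition. The scalar PDE for $\rho$ sees only the radial component of $u-c'$, so any divergence-free tangential ``swirl'' along level sets of $\rho$ is invisible to the argument above. Indeed, when $\mu=\nu$ is itself symmetric decreasing (so $c\equiv 0$), any divergence-free $u$ tangent to spheres about the origin---for instance the two-dimensional field $f(|x|)(-x_2,x_1)$ for a radial $f$---produces the same solution as the heat equation and thus satisfies (\ref{item:i}) without being linear in $x$. Overcoming this would require extra input: for example, probing the flow $\solP_s^u$ against auxiliary non-radial initial data with prescribed concentration profile, so that the tangential structure of $u$ becomes rigidly constrained; alternatively, the conjecture as stated may need a non-degeneracy hypothesis on $\mu$ (for instance, that $\mu$ is not itself symmetric about its center of mass) to be provable in the stated form. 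I would expect any complete argument to pass through one of these refinements.
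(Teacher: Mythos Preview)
The paper does not prove this statement: it is explicitly left as an open conjecture. The paper only asserts that the implication (\ref{item:ii}) $\Rightarrow$ (\ref{item:i}) is ``straightforward'' and that their techniques do not yield the reverse implication. Your argument for (\ref{item:ii}) $\Rightarrow$ (\ref{item:i}) via conjugation by the rigid-motion flow is correct and is precisely the kind of argument the authors have in mind.

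Your treatment of (\ref{item:i}) $\Rightarrow$ (\ref{item:ii}) goes further than the paper, and in fact what you frame as an ``obstacle'' is a genuine counterexample to the conjecture as stated. Taking $\mu=\nu$ radially symmetric and $u(x)=f(|x|)(-x_2,x_1)$ in $\R^2$ with $f$ smooth, nonconstant, and $r\mapsto r f(r)$ bounded, one has $u$ bounded and divergence-free, $u\cdot\nabla\rho=0$ for every radial $\rho$, and hence $\solP_s^u\mu=\e^{s\Delta}\nu$ identically; so (\ref{item:i}) holds while (\ref{item:ii}) fails. This is not a gap in your reasoning but a disproof of the equivalence, and it shows the conjecture needs an additional hypothesis (e.g.\ that $\mu$ not be radially symmetric about any point, or that the conclusion be relaxed to ``$u$ acts on $\solP_s^u\mu$ as a rigid motion'' rather than constraining $u$ globally). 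You should state this explicitly as a counterexample rather than as a difficulty to be overcome.
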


While it is straightforward to prove that Item~\ref{item:ii} implies Item~\ref{item:i}, our techniques do not imply the reverse implication. The central step in the argument where the inequality can become strict is in our application of the Riesz rearrangement inequality in the proof that pure diffusion preserves the ordering $\preceq$ between the two solutions. It is well known that the Riesz rearrangement inequality is strict except in very special cases, as is shown by~\cite[Theorem 3.9]{lieb_analysis_2001} and \cite{burchard_cases_1996}.

Using these results one could conclude that---for suitably non-degenerate advecting flows---the pulsed diffusion approximation to the advection-diffusion equation does in fact have a strict inequality: that is in the notation of Subsection~\ref{ss:time-independent}, for all $\delta>0$
\[\solP^{u,\delta}_t \mu \preceq \e^{t \Delta}\nu\quad \text{and} \quad \e^{t \Delta}\nu \not \preceq\solP^{u,\delta}_t \mu.\]
However, without a much more quantitative understanding of the discrepancy in the Riesz rearrangement inequality, it is not at all clear how to maintain the strictness of this inequality in the $\delta \to 0$ limit that we take to access the dynamics of the advection-diffusion operator~$\solP^u_t.$ As such, we leave Conjecture~\ref{conjecture} open.

\section{Proof of Theorem~\ref{thm:main-result}}

We note first the following stability in $f$ of the modulus of absolute continuity $C_f$.

\begin{lemma}
    \label{lem:concentration-continuity}
    Suppose $f,g \in L^1_+(\R^d)$, then for all $p \in [1,\infty]$ and $\alpha \geq 0$, we have
    \[|C_f(\alpha) - C_g(\alpha)| \leq \alpha^{1-\frac 1p} \|f-g\|_{L^p}.\]
\end{lemma}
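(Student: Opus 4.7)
The plan is to prove the inequality directly from the definition of $C_f$ by a one-line Hölder estimate, then symmetrize. The key observation is that for any Borel set $E$ with $|E|=\alpha$, we can write
\[\int_E f\dd x = \int_E g\dd x + \int_E (f-g)\dd x \leq \int_E g\dd x + \int_E |f-g|\dd x,\]
and the last integral is exactly the type of quantity Hölder's inequality controls when paired with the indicator $\indc_E$.

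More precisely, I would apply Hölder's inequality with exponents $p$ and $p' = p/(p-1)$ (with the usual convention for the endpoint cases $p=1$ and $p=\infty$) to get
\[\int_E |f-g|\dd x \leq \|f-g\|_{L^p} \cdot \|\indc_E\|_{L^{p'}} = \alpha^{1-\frac{1}{p}} \|f-g\|_{L^p}.\]
Combining with the preceding display yields
\[\int_E f\dd x \leq \int_E g \dd x + \alpha^{1-\frac{1}{p}}\|f-g\|_{L^p} \leq C_g(\alpha) + \alpha^{1-\frac{1}{p}}\|f-g\|_{L^p}.\]
Taking the supremum over all Borel $E$ with $|E|=\alpha$ on the left yields $C_f(\alpha) \leq C_g(\alpha) + \alpha^{1-1/p}\|f-g\|_{L^p}$.

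Finally, by symmetry, swapping the roles of $f$ and $g$ gives the reverse inequality, and together they yield the claimed two-sided bound. The endpoint cases are handled directly: for $p=1$ one has $\int_E |f-g|\dd x \leq \|f-g\|_{L^1}$ (independent of $\alpha$, matching $\alpha^0 = 1$), and for $p=\infty$ one has $\int_E|f-g|\dd x \leq \alpha \|f-g\|_{L^\infty}$ (matching $\alpha^{1-0}$). There is no real obstacle here; the only thing worth being slightly careful about is that the supremum in the definition of $C_f(\alpha)$ is taken over sets of exact measure $\alpha$ (as opposed to measure at most $\alpha$), but since the bound $\int_E|f-g| \leq \alpha^{1-1/p}\|f-g\|_{L^p}$ uses only $|E|=\alpha$, nothing needs to be adjusted.
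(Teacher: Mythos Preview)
Your proof is correct and is essentially identical to the paper's own argument: both bound $\int_E f - \int_E g$ by H\"older's inequality against $\indc_E$, then pass to the supremum over $|E|=\alpha$. The only cosmetic difference is that the paper assumes without loss of generality that $C_f(\alpha)\geq C_g(\alpha)$ instead of symmetrizing at the end.
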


\begin{proof}
    Let $p \in [1,\infty]$ be arbitrary and let $q$ be its H\"older conjugate. Fix $\alpha > 0$ and, without loss of generality, suppose $C_f(\alpha) \geq C_g(\alpha)$. Then, by H\"older's inequality and the definition of~$C_g(\alpha)$, we have, for every Borel set $E$ with $|E|=\alpha$, that
    \[\int_E f(x)\dd x = \int_E f(x)-g(x) \dd x + \int_E g(x) \dd x \leq |E|^{\frac 1q} \|f-g\|_{L^p} + C_g(\alpha) = \alpha^{\frac 1q} \|f-g\|_{L^p} + C_g(\alpha).\]
    Taking the supremum over $|E| = \alpha$, we conclude that $C_f(\alpha) \leq \alpha^{\frac 1q} \|f-g\|_{L^p} + C_g(\alpha)$.
\end{proof}

\subsection{Proof of the time-independent case by operator splitting}

\label{ss:time-independent}

Throughout this subsection, fix $u \in W^{1,\infty}(\R^d)$ to be a time-independent divergence-free vector field. Our goal is to show the following special case of Theorem~\ref{thm:main-result} for time-independent, spatially Lipschitz vector fields and $L^2$ initial data.

\begin{proposition}
\label{prop:l2-time-indepenent-smooth}
     If $f,g$ are such that
     \[f,g \in L^2(\R^d) \cap L^1_+(\R^d),\quad g^* = g, \quad \text{and} \quad f \preceq g,\]
     then 
     \[\solP^u_t f \preceq \e^{t\Delta} g\]
     for all $t\geq 0$.
\end{proposition}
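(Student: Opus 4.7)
The plan is to implement the operator-splitting scheme outlined in Subsection~\ref{ss:overview}. Let $\Phi^u_s$ denote the flow map of the autonomous ODE $\dot x = u(x)$ at time $s$, which is well defined and a volume-preserving $C^{0,1}$-diffeomorphism of $\R^d$ because $u\in W^{1,\infty}$ is divergence-free. For $h\in L^1_+\cap L^2$, write $T^u_s h := h\circ \Phi^u_{-s}$ for the pushforward; since $\Phi^u_s$ preserves Lebesgue measure, $T^u_s$ is an isometry on every $L^p(\R^d)$ and it solves the transport equation $\partial_s (T^u_s h)+u\cdot\nabla (T^u_s h)=0$. For $\delta>0$ and $n\in\N$, define the split semigroup
\[\solP^{u,\delta}_{n\delta} := \bigl(\e^{\delta\Delta}\, T^u_\delta\bigr)^n.\]

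First I would prove by induction on $n$ that $\solP^{u,\delta}_{n\delta}f\preceq \e^{n\delta\Delta}g$ for every $n\geq 0$. The base case is the hypothesis $f\preceq g$. For the inductive step, note that because $\Phi^u_\delta$ is volume-preserving, $|\{T^u_\delta h>\lambda\}|=|\{h>\lambda\}|$ for every $\lambda>0$, and hence $C_{T^u_\delta h}(\alpha)=C_h(\alpha)$ for all $\alpha\geq 0$. Combined with the inductive hypothesis this immediately gives $T^u_\delta\solP^{u,\delta}_{n\delta}f\preceq \e^{n\delta\Delta}g$: the advection step is free. Now invoke Proposition~\ref{prop:heat-increases-diffuseness} --- the statement that pure diffusion preserves $\preceq$ against a symmetric decreasing target. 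The target $\e^{n\delta\Delta}g$ is symmetric decreasing, since the Gaussian kernel and $g$ are symmetric decreasing and the class is closed under convolution (by the Riesz rearrangement inequality, Theorem~\ref{thm:riesz-rearrangement}, applied to characteristic functions of sublevel sets). Applying $\e^{\delta\Delta}$ to both sides then yields $\solP^{u,\delta}_{(n+1)\delta}f\preceq \e^{(n+1)\delta\Delta}g$, closing the induction.

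Second, I would pass to the limit $\delta\to 0$ along a sequence with $n_\delta\delta\to t$ for fixed $t\geq 0$. By a Trotter-product/Chernoff argument --- equivalently, by a direct Duhamel and energy estimate exploiting that $T^u_\delta$ is an $L^2$-isometry, $\e^{\delta\Delta}$ is an $L^2$-contraction, and $\|u\cdot\nabla\solP^u_s f\|_{L^2}$ admits uniform-in-$s$ control using $u\in L^\infty$ and the parabolic regularity of $\solP^u_s f$ --- one has the strong convergence
\[\solP^{u,\delta}_{n_\delta\delta}f \longrightarrow \solP^u_t f \quad\text{in } L^2(\R^d) \text{ as } \delta\to 0,\]
together with $\e^{n_\delta\delta\Delta}g\to\e^{t\Delta}g$ in $L^2$. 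Applying Lemma~\ref{lem:concentration-continuity} with $p=2$, each of $C_{\solP^{u,\delta}_{n_\delta\delta}f}$ and $C_{\e^{n_\delta\delta\Delta}g}$ converges pointwise (indeed uniformly on bounded intervals of $\alpha$) to $C_{\solP^u_t f}$ and $C_{\e^{t\Delta} g}$ respectively. The inequality established in the inductive step then passes to the limit to yield $\solP^u_t f\preceq \e^{t\Delta}g$.

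The main obstacle is the Trotter-type $L^2$-convergence of the split scheme to the true advection-diffusion semigroup; the Lipschitz hypothesis on $u$ is exactly what makes the flow map well defined and makes the alternating-step error controllable by standard semigroup estimates. Everything else in the argument --- the volume-preservation identity $C_{T^u_\delta h}=C_h$, the stability of the symmetric-decreasing class under $\e^{t\Delta}$, and the induction itself --- is elementary once Proposition~\ref{prop:heat-increases-diffuseness} is in hand.
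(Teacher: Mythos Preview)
Your proposal is correct and follows essentially the same approach as the paper: operator splitting into alternating transport and heat steps, induction using Proposition~\ref{prop:heat-increases-diffuseness} together with the fact that volume-preserving transport leaves $C_f$ unchanged, then a Trotter-type $L^2$ limit combined with Lemma~\ref{lem:concentration-continuity}. The only cosmetic differences are the ordering of the two factors in the split product and the paper's inclusion of an extra fractional heat step $\e^{(t-\delta\lfloor t/\delta\rfloor)\Delta}$ so that the comparison target is exactly $\e^{t\Delta}g$ at every stage, whereas you handle this by letting $n_\delta\delta\to t$ and passing $\e^{n_\delta\delta\Delta}g\to\e^{t\Delta}g$ in the limit.
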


\begin{definition}
    We let $\e^{-t u \cdot \nabla}$ denote the solution semigroup on $L^2(\R^d)$ to the transport equation
    \[\partial_t \theta + u \cdot \nabla \theta =0.\]
\end{definition}

We then have the following straightforward proposition, whose proof is deferred to Subsection~\ref{ss:proof-heat-diffuses}, which gives that the advection semigroup leaves the modulus of absolute continuity unaffected.

\begin{proposition}
\label{prop:advection-maintains-concentration}
    If $f \in  L^2(\R^d) \cap L^1_+(\R^d)$, then
    \[C_f = C_{\e^{- tu \cdot \nabla}f}\]
    for all $t\geq 0$.
\end{proposition}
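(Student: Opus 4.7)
The plan is to use the method of characteristics: since $u \in W^{1,\infty}(\R^d)$ is divergence-free, the ODE $\dot{\Phi}_t(x) = u(\Phi_t(x))$ with $\Phi_0(x)=x$ has a unique global flow $\Phi_t$ by Cauchy--Lipschitz, and Liouville's theorem guarantees that each $\Phi_t : \R^d \to \R^d$ is a volume-preserving diffeomorphism (with Jacobian determinant identically $1$, since $\nabla \cdot u = 0$). The classical identity is that $\e^{-tu\cdot\nabla}f = f \circ \Phi_{-t}$, which can be verified first for $f \in C^\infty_c(\R^d)$ by differentiating under the chain rule, and then extended to $f \in L^2(\R^d)$ by approximation, using that composition with the volume-preserving $\Phi_{-t}$ is an $L^2$ isometry.

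Once this representation is in hand, the proof of the equality $C_f = C_{\e^{-tu\cdot\nabla}f}$ is a direct change-of-variables argument. Fix $\alpha \geq 0$ and $t\geq 0$. For any Borel $E \subseteq \R^d$ with $|E|=\alpha$, the volume-preserving change of variables $y = \Phi_{-t}(x)$ gives
\[\int_E (\e^{-tu\cdot\nabla}f)(x)\dd x = \int_E f(\Phi_{-t}(x))\dd x = \int_{\Phi_{-t}(E)} f(y)\dd y,\]
and $|\Phi_{-t}(E)|=|E|=\alpha$ since $\Phi_{-t}$ is measure-preserving. Taking the supremum over such $E$ yields $C_{\e^{-tu\cdot\nabla}f}(\alpha) \leq C_f(\alpha)$. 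The reverse inequality follows from the same computation applied to $F := \Phi_t(E)$ in place of $E$, together with the fact that $\Phi_{-t}$ is the inverse of $\Phi_t$ (so one exchanges the roles of $f$ and $\e^{-tu\cdot\nabla}f$).

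The only non-routine point is the passage from smooth compactly supported data to general $L^2 \cap L^1_+$ data in establishing the Lagrangian representation $\e^{-tu\cdot\nabla}f = f\circ\Phi_{-t}$; but this is standard given that $u$ is Lipschitz and divergence-free, and Lemma~\ref{lem:concentration-continuity} moreover ensures that $C_f$ depends continuously on $f$ in $L^p$, which means any mild $L^p$ approximation of $f$ by smooth functions is enough to propagate the equality. There is no genuine obstacle: the whole content of the proposition is the tautology that volume-preserving relabelings of $\R^d$ cannot increase (nor decrease) the maximal mass of $f$ over sets of prescribed volume.
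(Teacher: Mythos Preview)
Your argument is correct, but it takes a somewhat different route from the paper's. The paper's proof is a one-liner that factors through the symmetric decreasing rearrangement: since the divergence-free transport preserves the distribution function (i.e.\ the measures of all super-level sets), one has $(\e^{-tu\cdot\nabla}f)^* = f^*$, and then Lemma~\ref{lem:concentration-representation} gives $C_{\e^{-tu\cdot\nabla}f} = C_{(\e^{-tu\cdot\nabla}f)^*} = C_{f^*} = C_f$. Your approach instead works directly with the Lagrangian flow $\Phi_t$ and the change-of-variables formula, bypassing both rearrangements and Lemma~\ref{lem:concentration-representation} entirely. Both arguments rest on the same underlying fact---that $\Phi_t$ is a measure-preserving bijection---but you unpack it at the level of integrals over sets, whereas the paper packages it as equimeasurability and then appeals to machinery already in place. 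Your route is more self-contained (it would stand even without Lemma~\ref{lem:concentration-representation}, whose proof is not entirely trivial), while the paper's route is shorter given what has already been developed.
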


The second pivotal estimate we have is the following, which gives that the heat semigroup is order-preserving under the relation $\preceq$, provided that the measure on the right-hand side is symmetric decreasing. We also defer the proof to Subsection~\ref{ss:proof-heat-diffuses}.

\begin{proposition}
    \label{prop:heat-increases-diffuseness}
    Let $\mu, \nu \in \mathcal{M}_+(\R^d)$. If $\nu$ is symmetric decreasing and $\mu \preceq \nu$, then 
    \[\e^{t \Delta} \mu \preceq \e^{t \Delta} \nu\]
    for all $t\geq 0$.
\end{proposition}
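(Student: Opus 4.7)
The plan is to leverage the Riesz rearrangement inequality together with a layer-cake argument to transfer the hypothesis $\mu \preceq \nu$ through the heat semigroup. The case $t=0$ is trivial, so fix $t>0$. Then both $\e^{t\Delta}\mu$ and $\e^{t\Delta}\nu$ are absolutely continuous with densities obtained by convolution with the heat kernel $K_t$, and $K_t * \nu$ is symmetric decreasing since both $K_t$ and $\nu$ are. Fixing a Borel set $E$ with $|E|=\alpha$, Fubini's theorem gives
\[\int_E \e^{t\Delta}\mu(x)\,\dd x = \int (K_t*\indc_E)(y)\,\dd\mu(y),\]
so it suffices to bound the right-hand side uniformly in $E$ by $C_{\e^{t\Delta}\nu}(\alpha)$.

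The first ingredient I would establish is a transfer lemma: if $\mu\preceq\nu$, $\nu$ is symmetric decreasing, and $h:\R^d\to[0,\infty)$ is measurable, then $\int h\,\dd\mu \leq \int h^*\,\dd\nu$. This follows from the layer-cake identity $\int h\,\dd\mu = \int_0^\infty \mu(\{h>s\})\,\dd s$, the inequalities $\mu(\{h>s\}) \leq C_\mu(|\{h>s\}|) \leq C_\nu(|\{h^*>s\}|)$ (using equimeasurability and $\mu\preceq\nu$), and the observation that $\{h^*>s\}$ is a ball centered at the origin on which the symmetric-decreasing property of $\nu$ forces $C_\nu(|\{h^*>s\}|) = \nu(\{h^*>s\})$. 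Applied with $h = K_t*\indc_E$, this yields
\[\int_E \e^{t\Delta}\mu(x)\,\dd x \leq \int (K_t*\indc_E)^*(y)\,\dd\nu(y).\]

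The second ingredient is the Riesz rearrangement inequality (Theorem~\ref{thm:riesz-rearrangement}) applied to the triple $(\indc_F, K_t, \indc_E)$, which gives $\int_F K_t*\indc_E\,\dd y \leq \int_{F^*} K_t*\indc_{E^*}\,\dd y$ for every Borel $F$. Taking the supremum over $|F|$ fixed and using that $K_t*\indc_{E^*}$ is symmetric decreasing (as a convolution of two symmetric decreasing functions), this shows $C_{K_t*\indc_E} \leq C_{K_t*\indc_{E^*}}$. Since $C_h = C_{h^*}$ (an immediate consequence of the layer-cake representation), this is exactly $(K_t*\indc_E)^* \preceq K_t*\indc_{E^*}$ as a pair of symmetric decreasing functions.

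To close the argument, I would verify the companion statement: if $h_1\preceq h_2$ are both symmetric decreasing and $\nu$ is symmetric decreasing, then $\int h_1\,\dd\nu \leq \int h_2\,\dd\nu$. Decomposing $\nu = g\,\dd x + c\,\delta_0$ with $g$ symmetric decreasing, layer cake on $g$ rewrites $\int h_i g\,\dd x = \int_0^\infty C_{h_i}(|\{g>s\}|)\,\dd s$, so $C_{h_1}\leq C_{h_2}$ yields $\int h_1 g\,\dd x\leq \int h_2 g\,\dd x$; the remaining atom term is handled by $h_i(0) = \lim_{\alpha\to 0^+} C_{h_i}(\alpha)/\alpha$, which gives $h_1(0)\leq h_2(0)$. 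Applied to $h_1 = (K_t*\indc_E)^*$ and $h_2 = K_t*\indc_{E^*}$, and combined with the identity $\int K_t*\indc_{E^*}\,\dd\nu = \int_{E^*}\e^{t\Delta}\nu(x)\,\dd x = C_{\e^{t\Delta}\nu}(\alpha)$ (which uses the symmetric-decreasing nature of $K_t*\nu$ so that $E^* = B_r$ is a maximizer), taking the supremum over $E$ with $|E|=\alpha$ completes the proof. The only delicate point is bookkeeping around singular parts of $\mu$ and $\nu$, but Remark~\ref{rem:C_mu-decomposition} and the layer-cake formalism absorb this seamlessly, so no technique beyond the Riesz inequality is required.
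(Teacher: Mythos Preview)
Your proof is correct and rests on the same two pillars as the paper's---the Riesz rearrangement inequality and a layer-cake comparison exploiting that $\nu$ is symmetric decreasing---but you apply them in the opposite order. The paper first invokes a measure-valued Riesz inequality (its Corollary to Theorem~\ref{thm:riesz-rearrangement}) to pass from $\int_E \e^{t\Delta}\mu$ directly to $\int(\indc_{B_{r_\alpha}}\star\Phi_t)\,\dd\mu^*$, and only then upgrades $\mu^*$ to $\nu$ via Lemma~\ref{lem:test-with-monotone}; you instead upgrade $\mu$ to $\nu$ first (your transfer lemma, which needs only layer cake and $\mu\preceq\nu$), land at $\int(K_t*\indc_E)^*\,\dd\nu$, and then use Riesz for functions plus your companion lemma to replace $(K_t*\indc_E)^*$ by $K_t*\indc_{E^*}$. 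Your ordering has the mild advantage of never needing the extension of Riesz to measures, at the cost of one extra intermediate step; otherwise the two arguments are reorderings of the same content.
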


Our strategy is to iterate the above two propositions for a split operator approximation to $\solP^u_t$ in order to prove Proposition~\ref{prop:l2-time-indepenent-smooth}.

\begin{definition}
    For any $\delta>0$ and $t\geq 0$, define the $\delta$-split operator approximation to $\solP^u_t$, denoted by $\solP^{u,\delta}_t,$ by
    \[\solP^{u,\delta}_t := \e^{(t - \delta\lfloor t/\delta\rfloor) \Delta}\big(\e^{-\delta u \cdot \nabla } \e^{\delta \Delta}\big)^{\lfloor t/\delta\rfloor}.\]
\end{definition}

\begin{remark}
    The term $\e^{(t - \delta\lfloor t/\delta\rfloor) \Delta}$, which does not typically appear in such splittings, is inserted---with no effect on the limit $\delta\to 0$---so that $\solP^{u,\delta}_t$ involves exactly the same exposure to pure diffusion as~$\e^{t\Delta}$.
\end{remark}

We will need the following convergence result, which follows from an application of the Trotter product formula; see~\cite{Tr59} or \cite[\S{III.5}]{engel_one-parameter_2000}.

\begin{proposition}
\label{prop:trotter}
    If $f \in L^2(\R^d)$, then  
    \[ 
    \lim_{\delta\to 0} \|\solP^{u,\delta}_t f - \solP^{u}_tf\|_{L^2} = 0
    \]
    for all $t \geq 0$.
\end{proposition}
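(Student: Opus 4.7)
The plan is to derive this from the Trotter product formula \cite{Tr59,engel_one-parameter_2000}, with a brief extra argument to accommodate the prefactor $\e^{(t-\delta\lfloor t/\delta\rfloor)\Delta}$ and the fact that $\delta$ need not equal $t/n$. First, I would set up the semigroup framework on $L^2(\R^d)$: both $A := \Delta$ (with domain $H^2(\R^d)$) and $B := -u\cdot\nabla$ generate $C_0$-contraction semigroups, with the latter in fact unitary because $u \in W^{1,\infty}$ has a Lipschitz, volume-preserving flow $\Phi_t^u$ so that $(\e^{-tu\cdot\nabla}f)(x)=f(\Phi_{-t}^u(x))$. The sum $A+B$ on $H^2(\R^d)$ is dissipative (integration by parts together with $\mathrm{div}\, u = 0$) and, since $-u\cdot\nabla$ is $\Delta$-bounded with $\Delta$-bound zero (via $\|\nabla f\|_{L^2}^2 \leq \|f\|_{L^2}\|\Delta f\|_{L^2}$), it generates a $C_0$-contraction semigroup by Kato--Rellich; this semigroup agrees with $\solP_t^u$ by uniqueness for the advection-diffusion equation, and $C_c^\infty(\R^d)$ is a common core.

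With this in place, Trotter's product formula yields, for each $f\in L^2(\R^d)$, the strong convergence
\[\bigl(\e^{-(s/n) u\cdot\nabla}\e^{(s/n)\Delta}\bigr)^n f \longrightarrow \solP^u_s f,\]
as $n\to\infty$, uniformly in $s$ on every compact subset of $[0,\infty)$. To conclude the proposition, I set $n := \lfloor t/\delta\rfloor$ and $s := n\delta \in [t-\delta, t]$, so that the iterated product in $\solP^{u,\delta}_t$ equals $(\e^{-(s/n)u\cdot\nabla}\e^{(s/n)\Delta})^n$. Writing $V(\delta) := \e^{-\delta u\cdot\nabla}\e^{\delta\Delta}$ and using that $\e^{(t-s)\Delta}$ is an $L^2$-contraction, the triangle inequality gives
\[\|\solP^{u,\delta}_t f - \solP^u_t f\|_{L^2} \leq \|V(\delta)^n f - \solP^u_s f\|_{L^2} + \|\e^{(t-s)\Delta}\solP^u_s f - \solP^u_t f\|_{L^2}.\]
The first term vanishes as $\delta\to 0$ by the uniform-in-$s$ Trotter convergence (since $n\to\infty$ and $s\to t$), and the second by strong continuity of $\e^{r\Delta}$ and $\solP^u_r$ at $r=t$ (since $t-s\in [0,\delta)\to 0$).

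The only part requiring care is the preliminary identification of the generator of $\solP^u_t$ with (the closure of) $\Delta - u\cdot\nabla$ and the verification of the Trotter hypotheses, which is where the regularity assumption $u\in W^{1,\infty}$ enters essentially; everything else is a direct application of standard semigroup machinery.
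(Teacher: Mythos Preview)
Your proposal is correct and follows essentially the same approach as the paper: the paper simply states that the result ``follows from an application of the Trotter product formula'' and cites \cite{Tr59} and \cite[\S{III.5}]{engel_one-parameter_2000}, without further detail. Your argument spells out what the paper leaves implicit---the semigroup setup via Kato--Rellich, the invocation of Chernoff/Trotter with uniform-in-$s$ convergence on compacts, and the handling of the residual prefactor $\e^{(t-\delta\lfloor t/\delta\rfloor)\Delta}$ (which the paper only addresses in a remark, noting it ``has no effect on the limit $\delta\to 0$''). One minor point: in bounding the second term $\|\e^{(t-s)\Delta}\solP^u_s f - \solP^u_t f\|_{L^2}$, the vector $\solP^u_s f$ varies with~$\delta$, so strictly speaking you should insert and subtract $\e^{(t-s)\Delta}\solP^u_t f$ and use the contractivity of $\e^{(t-s)\Delta}$ together with the strong continuity of both semigroups; this is routine and does not affect the argument.
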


We now state Proposition~\ref{prop:l2-time-indepenent-smooth} with the split operator $\solP^{u,\delta}_t$ in place of $\solP^u_t$.

\begin{proposition}
\label{prop:split-operator-increases-diffuseness}
     If $f,g$ are such that
     \[f,g \in L^2(\R^d) \cap L^1_+(\R^d),\quad g^* = g, \quad \text{and} \quad f \preceq g,\]
     then
     \[\solP^{u,\delta}_t f \preceq \e^{t\Delta} g\]
     for all $t\geq 0$ and $\delta>0$.
\end{proposition}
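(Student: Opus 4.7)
The plan is to prove the claim by induction on the number of advection-diffusion pulses $n := \lfloor t/\delta \rfloor$, using Propositions~\ref{prop:advection-maintains-concentration} and~\ref{prop:heat-increases-diffuseness} alternately. Writing $r := t - n\delta \in [0,\delta)$, so that $\solP_t^{u,\delta} = \e^{r\Delta}(\e^{-\delta u\cdot\nabla}\e^{\delta\Delta})^n$, I would establish by induction on $k \in \{0,1,\dots,n\}$ the claim
\[
(\e^{-\delta u\cdot\nabla}\e^{\delta\Delta})^k f \preceq \e^{k\delta\Delta} g.
\]
The base case $k=0$ is the hypothesis $f\preceq g$. For the inductive step, assume the inclusion at stage $k$ and note that $\e^{k\delta\Delta} g$ is symmetric decreasing: indeed, the heat semigroup acts by convolution with a Gaussian (which is itself symmetric decreasing), so by Theorem~\ref{thm:riesz-rearrangement} (or a direct application of Proposition~\ref{prop:heat-increases-diffuseness} with $\mu=\nu=g$) it preserves the class of symmetric decreasing functions. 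Consequently, Proposition~\ref{prop:heat-increases-diffuseness} applies and yields
\[
\e^{\delta\Delta}(\e^{-\delta u\cdot\nabla}\e^{\delta\Delta})^k f \preceq \e^{(k+1)\delta\Delta} g.
\]
Next, Proposition~\ref{prop:advection-maintains-concentration} guarantees that applying $\e^{-\delta u\cdot\nabla}$ to the left-hand side does not change its modulus of absolute continuity, so the relation $\preceq$ is preserved; this gives the inductive conclusion at stage $k+1$. Here I would briefly note that the hypotheses of Proposition~\ref{prop:advection-maintains-concentration} are met, because $\e^{\delta\Delta}$ preserves $L^2\cap L^1_+$ and so does $\e^{-\delta u\cdot\nabla}$ (as $u$ is divergence-free and Lipschitz, the associated transport semigroup acts by composition with the volume-preserving Lagrangian flow).

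After $n$ iterations, the chain yields
\[
(\e^{-\delta u\cdot\nabla}\e^{\delta\Delta})^n f \preceq \e^{n\delta\Delta} g.
\]
Finally, I would apply Proposition~\ref{prop:heat-increases-diffuseness} once more, with diffusion time $r$, invoking again the fact that $\e^{n\delta\Delta} g$ is symmetric decreasing, to get
\[
\solP_t^{u,\delta} f = \e^{r\Delta}(\e^{-\delta u\cdot\nabla}\e^{\delta\Delta})^n f \preceq \e^{r\Delta}\e^{n\delta\Delta} g = \e^{t\Delta} g,
\]
which is the desired conclusion.

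The main obstacle is not really an obstacle here but rather a bookkeeping point: one must keep track at each step of which side of the relation $\preceq$ is symmetric decreasing, since Proposition~\ref{prop:heat-increases-diffuseness} requires this of the right-hand side. The fact that the heat semigroup preserves symmetric decreasingness—i.e., that Gaussian convolution propagates this property—is the essential ingredient that makes the inductive scheme close. All other ingredients are direct applications of the two quoted propositions, and no quantitative estimate on the splitting error is needed at this stage (the qualitative passage $\delta\to 0$ will be handled afterwards via Proposition~\ref{prop:trotter} and Lemma~\ref{lem:concentration-continuity}).
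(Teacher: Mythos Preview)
The proposal is correct and follows essentially the same approach as the paper: induct on the number of pulses, alternating Propositions~\ref{prop:heat-increases-diffuseness} and~\ref{prop:advection-maintains-concentration}, using that the heat semigroup preserves the symmetric decreasing property of the right-hand side. Your organization---inducting directly on $k$ for the discrete product $(\e^{-\delta u\cdot\nabla}\e^{\delta\Delta})^k$ and then applying one final heat step of length $r$---is in fact slightly more direct than the paper's, which inducts over the time intervals $[0,(n+1)\delta)$ and handles the endpoints $t=n\delta$ by an $L^2$ limit and Lemma~\ref{lem:concentration-continuity}.
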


Combining the above results, this then directly gives Proposition~\ref{prop:l2-time-indepenent-smooth}. 
\begin{proof}[Proof of Proposition~\ref{prop:l2-time-indepenent-smooth}]
    Direct from Proposition~\ref{prop:split-operator-increases-diffuseness} and then taking $\delta \to 0$ and using Proposition~\ref{prop:trotter} and Lemma~\ref{lem:concentration-continuity}.
\end{proof}

\begin{proof}[Proof of Proposition~\ref{prop:split-operator-increases-diffuseness}]
    Fix $f$, $g$, and $\delta$ as above. We prove the following statements---indexed by $n \in \N$---by induction:
    \[\text{for all } t \in [0,(n+1)\delta),\quad \solP_t^{u,\delta} f \preceq \e^{t \Delta}g.\]
    For $n=0$, this follows directly from the definition of $\solP^{u,\delta}_t$, the assumptions on $f$ and $g$, and Proposition~\ref{prop:heat-increases-diffuseness}. Inductively supposing the result holds for $n-1$, we want to show that 
    \[\text{for } t \in [n \delta, (n+1)\delta),\quad  \solP_t^{u,\delta} f \preceq \e^{t \Delta}g.\]
    If we have that $\solP_{n\delta}^{u,\delta} f \preceq \e^{n\delta \Delta}g,$ we can conclude again using Proposition~\ref{prop:heat-increases-diffuseness} for $t \in (n\delta, (n+1)\delta)$. Finally, for $t = n\delta$, we note that, by the definition of $\solP_t^{u,\delta}$,
    \[\solP_{n\delta}^{u,\delta} f = \lim_{t \nearrow n\delta} \e^{-\delta u \cdot \nabla} \solP_{t}^{u,\delta} f,\]
    with convergence in $L^2$. Then we have by the inductive hypothesis that $\solP_t^{u,\delta} f \preceq \e^{t \Delta} g$ for $t < n \delta$. Then by Proposition~\ref{prop:advection-maintains-concentration}, we have that for all $t<n\delta$,
    \[\e^{-\delta u \cdot \nabla} \solP_{t}^{u,\delta} f \preceq \e^{t \Delta}g.\]
    Passing to the limit and using Lemma~\ref{lem:concentration-continuity}, we conclude.
\end{proof}

\subsection{\texorpdfstring{Reduction to the case of $L^2$ data and a smooth time-independent vector field}{Reduction to the case of L2 data and a smooth time-independent vector field}}

We now reduce the general statement of Theorem~\ref{thm:main-result} to the special case given by Proposition~\ref{prop:l2-time-indepenent-smooth} by approximation. To this end, we show that any bounded, measurable, divergence-free vector field $u$ can be suitably approximated---while getting convergence of the associated solutions up to a fixed time---by those we can treat using Proposition~\ref{prop:l2-time-indepenent-smooth}, i.e., those that are piecewise constant in time and uniformly Lipschitz in space.

\begin{lemma}
    \label{lem:approximate-by-mollification}
    Let $u \in L^\infty_{t,x}$ be a divergence-free vector field and let $g \in L^2(\R^d)$. Then there exists a sequence $(u_j)_j$ of bounded, divergence-free, piecewise constant in time, uniformly Lipschitz in space vector fields such that 
        \[\lim_{j\to\infty}\|\solP^{u_j}_t g - \solP^u_t g\|_{L^2} = 0.\]
    for all $t >0$.
\end{lemma}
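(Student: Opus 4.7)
The plan is to construct $(u_j)_j$ by spatial mollification followed by piecewise-constant time averaging, and then compare solutions by a direct energy estimate for the difference. Fix a standard nonnegative mollifier $\rho \in C^\infty_c(\R^d)$ with $\int \rho = 1$, set $\rho_j(x) := j^d \rho(jx)$, and define $\tilde u_j(t,x) := (\rho_j \ast_x u(t,\,\cdot\,))(x)$. Then set
\[u_j(t,x) := j\int_{k/j}^{(k+1)/j} \tilde u_j(s,x)\dd s \qquad \text{for } t \in [k/j,(k+1)/j),\ k = 0,1,2,\dots\]
Since convolution in $x$ and averaging in $t$ commute with the spatial divergence, each $u_j$ is divergence-free in the sense of distributions; since $\rho \ge 0$ has unit mass, $\|u_j\|_{L^\infty_{t,x}} \le \|u\|_{L^\infty_{t,x}}$; each $u_j(t,\,\cdot\,)$ is Lipschitz with constant at most $j\|\nabla\rho\|_{L^1}\|u\|_{L^\infty}$, uniformly in $t$; and $u_j$ is piecewise constant in time by construction. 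Standard mollifier properties and dominated convergence give $u_j \to u$ in $L^2_{\mathrm{loc}}([0,\infty)\times\R^d)$, so, passing to a subsequence (still denoted $u_j$), we may assume $u_j \to u$ almost everywhere.

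Write $\theta(s,x) := (\solP^u_s g)(x)$ and $\theta_j(s,x) := (\solP^{u_j}_s g)(x)$. The standard energy identity for bounded divergence-free drifts gives $\theta,\theta_j \in L^\infty_t L^2_x \cap L^2_t H^1_x$ with $\|\theta(s)\|_{L^2}, \|\theta_j(s)\|_{L^2} \le \|g\|_{L^2}$. The difference $w_j := \theta_j - \theta$ satisfies $w_j(0)=0$ and
\[\partial_s w_j - \Delta w_j + u_j \cdot \nabla w_j = (u - u_j)\cdot \nabla \theta.\]
Testing against $w_j$ and using $\nabla\cdot u_j = 0$ yields
\[\tfrac{1}{2}\tfrac{\dd}{\dd s}\|w_j(s)\|_{L^2}^2 + \|\nabla w_j(s)\|_{L^2}^2 = \int_{\R^d}(u-u_j)\cdot \nabla\theta\, w_j\dd x.\]
The decisive move is to exploit the fact that $u - u_j$ is \emph{also} divergence-free: an integration by parts transforms the right-hand side into $-\int_{\R^d} \theta\,(u - u_j)\cdot \nabla w_j \dd x$, after which Cauchy--Schwarz and Young's inequality absorb the $\nabla w_j$ factor into the good term on the left. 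This yields $\tfrac{\dd}{\dd s}\|w_j(s)\|_{L^2}^2 \le \|\theta(s,\,\cdot\,)\,(u-u_j)(s,\,\cdot\,)\|_{L^2}^2$, and integration in $s$ gives
\[\|w_j(t)\|_{L^2}^2 \le \int_0^t \int_{\R^d} |\theta(s,x)|^2\,|u(s,x) - u_j(s,x)|^2 \dd x\dd s.\]
The integrand is dominated by $4\|u\|_{L^\infty}^2 |\theta|^2$, which lies in $L^1([0,t]\times\R^d)$ since $\int_0^t \|\theta(s)\|_{L^2}^2 \dd s \le t\|g\|_{L^2}^2$, and tends to zero almost everywhere because $u_j \to u$ a.e.; the dominated convergence theorem then gives $\|w_j(t)\|_{L^2} \to 0$, as required.

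The main obstacle I anticipate is precisely the integration-by-parts manoeuvre. The naive estimate $|\int(u-u_j)\cdot\nabla\theta\,w_j \dd x| \le \|u - u_j\|_{L^\infty}\|\nabla\theta\|_{L^2}\|w_j\|_{L^2}$ would force an appeal to $L^\infty$ convergence $u_j \to u$, which fails for merely bounded $u$. Shifting the derivative from $\theta$ onto $w_j$ (where it is already controlled by parabolic dissipation) reduces the problem to an $L^2$-type convergence statement, accessible by dominated convergence. A secondary, more technical point is justifying this integration by parts rigorously for $w_j, \theta \in H^1(\R^d)$ and $(u - u_j) \in L^\infty$; this follows from standard density arguments approximating $\theta$ and $w_j$ by smooth compactly supported functions.
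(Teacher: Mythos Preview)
Your proof is correct and follows essentially the same route as the paper: construct approximants by mollification plus time-discretization (the paper uses space-time mollification and time-freezing, you use spatial mollification and time-averaging, a cosmetic difference), extract an a.e.-convergent subsequence, then run the same energy estimate on the difference, with the same integration-by-parts manoeuvre (exploiting $\nabla\cdot(u-u_j)=0$) to shift the gradient from $\theta$ onto $w_j$ and close by dominated convergence. The paper presents the identity already in its integrated-by-parts form but the argument is identical.
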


\begin{proof}
    Extend $u$ to $\R \times \R^d$ by $0$ for $t <0.$ With $(\eta_\varepsilon)_\varepsilon$ a standard family of space-time mollifiers, let 
    $u_\varepsilon := \eta_\varepsilon * u$ and $u_{\varepsilon,\delta}(t,x):= u_\varepsilon(\lfloor t/\delta \rfloor \delta, x)$.
    In $L^p_{\text{loc}}(\R\times\R^d)$, we have $u_\varepsilon \to u$ as $\varepsilon \to 0$, and $u_{\varepsilon,\delta} \to  u_{\varepsilon}$ as $\delta \to 0$ with $\varepsilon$ fixed. 
    Hence, we can extract a sequence $(\varepsilon_j,\delta_j)_j$ with the property that, still in $L^p_{\text{loc}}(\R\times\R^d)$, we have
    $u_j := u_{\varepsilon_j,\delta_j}  \to u$ as $j\to\infty$.
    Therefore, there exists a subsequence along which we have almost everywhere convergence; for notational simplicity, let us simply say that $u_j \to u$ almost everywhere as $j\to\infty$. 
    
    By construction, each $u_j$ is divergence-free, piecewise constant in time and uniformly Lipschitz in space, and it remains to show that, for each $t>0$,
    \[\lim_{j\to\infty}\|\solP^{u_j}_t g - \solP^u_t g\|_{L^2} = 0.\]
    To this end, we define $\gamma_j(t,\,\cdot\,) := \solP^{u_j}_t g - \solP^u_t g$ and $\varphi(t,\,\cdot\,) := \solP^u_t g$ and note that, by direct computation, we have $\gamma_j(0,\,\cdot\,) =0 $ and
    \[\partial_t \gamma_j -\Delta \gamma_j + u_j \cdot \nabla \gamma_j = (u-u_j) \cdot \nabla \varphi.\]
    Thus, using that $\nabla \cdot u_j = \nabla \cdot u = 0$ and Young's inequality,
    \begin{align*}
        \frac{\dd}{\dd t} \|\gamma_j(t,\,\cdot\,)\|_{L^2}^2 
            &= - 2\|\nabla \gamma_j(t,\,\cdot\,)\|_{L^2}^2 - 2\int \nabla \gamma_j(t,x) \cdot (u(t,x)-u_j(t,x)) \varphi(t,x)\dd x \\
            &\leq  \int |u(t,x)-u_j(t,x)|^2 \varphi(t,x)^2\dd x,
    \end{align*}
    and therefore
    \[\|\gamma_j(t,\,\cdot\,)\|_{L^2}^2 \leq \int_0^t \int |u(s,x)-u_j(s,x)|^2 \varphi(s,x)^2\dd x\dd s.\]
    We have already seen that $u-u_j \to 0$ almost everywhere as $j\to\infty$, and thus the same is true of $|u-u_j|^2\varphi^2$. In addition, we have $\varphi \in L^2([0,t]\times\R^d)$ and $|u-u_j| \leq 2 \|u\|_{L^\infty_{t,x}}$, so $|u-u_j|^2\varphi^2$ is dominated by 
        %$2\|u\|^2_{L^\infty_{t,x}}\varphi^2 \in L^1([0,t]\times\R^d)$
        an element of $L^1([0,t]\times\R^d)$.
    By dominated convergence, this implies
    \[\lim_{j\to\infty} \|\gamma_j(t,\,\cdot\,)\|_{L^2} =0. \]
    Recalling the definition of $\gamma_j$, we conclude.
\end{proof}

The following is now direct from Proposition~\ref{prop:l2-time-indepenent-smooth}, Lemma~\ref{lem:approximate-by-mollification}, and Lemma~\ref{lem:concentration-continuity}. 

\begin{proposition}
    \label{prop:l2-general}
     Let $u \in L^\infty_{t,x}$ be a divergence-free vector field.
     If $f,g$ are such that
     \[f,g \in L^2(\R^d) \cap L^1_+(\R^d),\quad g^* = g, \quad \text{and} \quad f \preceq g,\]
     then 
     \[\solP^u_t f \preceq \e^{t\Delta} g\]
     for all $t\geq 0$.
\end{proposition}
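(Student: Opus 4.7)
The plan is to combine the three cited ingredients in the natural way: upgrade Proposition~\ref{prop:l2-time-indepenent-smooth} from time-independent vector fields to vector fields that are piecewise constant in time by concatenation, then approximate a general $u \in L^\infty_{t,x}$ via Lemma~\ref{lem:approximate-by-mollification}, and finally pass to the limit using the $L^2$-stability of $C_{\,\cdot\,}$ from Lemma~\ref{lem:concentration-continuity}.

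First I would apply Lemma~\ref{lem:approximate-by-mollification} (with its ``$g$'' taken to be our $f$) to produce a sequence $(u_j)_j$ of bounded, divergence-free vector fields that are piecewise constant in time and uniformly Lipschitz in space, with $\|\solP^{u_j}_t f - \solP^u_t f\|_{L^2} \to 0$ for every fixed $t > 0$. For each $j$, write the time-breakpoints $0 = t_0^{(j)} < t_1^{(j)} < \dots$ so that $u_j(s,\,\cdot\,) = v_k^{(j)} \in W^{1,\infty}(\R^d)$ for $s \in [t_k^{(j)}, t_{k+1}^{(j)})$.

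Next I would prove by induction on $k$ that $\solP^{u_j}_t f \preceq \e^{t\Delta} g$ for all $t \in [0, t_{k+1}^{(j)})$. Assuming the claim at $t_k^{(j)}$, note that $\e^{t_k^{(j)}\Delta} g$ is symmetric decreasing because $g$ is and the heat kernel is symmetric decreasing (convolution of symmetric decreasing functions is symmetric decreasing). Also, $\solP^{u_j}_{t_k^{(j)}} f$ remains in $L^2(\R^d) \cap L^1_+(\R^d)$ since advection by a bounded divergence-free field preserves positivity and $L^1$-mass while the diffusion is $L^2$-contractive. We may therefore apply Proposition~\ref{prop:l2-time-indepenent-smooth} with the time-independent vector field $v_k^{(j)}$, initial datum $\solP^{u_j}_{t_k^{(j)}} f$, and comparison datum $\e^{t_k^{(j)}\Delta} g$. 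This yields, for every $s \in [0, t_{k+1}^{(j)} - t_k^{(j)}]$,
\[
\solP^{u_j}_{t_k^{(j)} + s} f \;=\; \solP^{v_k^{(j)}}_s \solP^{u_j}_{t_k^{(j)}} f \;\preceq\; \e^{s\Delta} \e^{t_k^{(j)}\Delta} g \;=\; \e^{(t_k^{(j)} + s)\Delta} g,
\]
extending the claim to $[0, t_{k+1}^{(j)})$. The base case $k = 0$ is identical, starting from the hypothesis $f \preceq g$.

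Finally, with $\solP^{u_j}_t f \preceq \e^{t\Delta} g$ in hand for every $j$ and every fixed $t \geq 0$, I would pass to the limit $j \to \infty$. By Lemma~\ref{lem:approximate-by-mollification} we have $\solP^{u_j}_t f \to \solP^u_t f$ in $L^2$, and Lemma~\ref{lem:concentration-continuity} (with $p = 2$) then gives $C_{\solP^{u_j}_t f}(\alpha) \to C_{\solP^u_t f}(\alpha)$ for every $\alpha \geq 0$; passing to the limit in the inequality $C_{\solP^{u_j}_t f}(\alpha) \leq C_{\e^{t\Delta} g}(\alpha)$ yields the desired conclusion. The only real subtlety is bookkeeping in the inductive step: one must verify at each breakpoint that the symmetric decreasing structure of the right-hand side is preserved by the heat flow and that the left-hand side still sits in the class $L^2 \cap L^1_+$ to which Proposition~\ref{prop:l2-time-indepenent-smooth} applies, but both are immediate from standard properties of the heat semigroup and divergence-free advection-diffusion.
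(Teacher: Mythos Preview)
Your proof is correct and follows exactly the route the paper intends: the paper's one-line proof cites the same three ingredients, and the only step it leaves implicit---bridging from time-independent to piecewise-constant-in-time vector fields by induction over the constant pieces---is precisely what you spell out. The verification that $\e^{t_k^{(j)}\Delta}g$ remains symmetric decreasing (so Proposition~\ref{prop:l2-time-indepenent-smooth} can be reapplied at each breakpoint) is the only point requiring care, and you handle it correctly.
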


Finally, we remove the requirement that the data is $L^2(\R^d)$ to conclude.

\begin{proof}[Proof of Theorem~\ref{thm:main-result}]
    Let
    \[\mu_\varepsilon := \e^{\varepsilon \Delta}\mu \quad \text{and}\quad \nu_\varepsilon := \e^{\varepsilon \Delta}\nu.\]
    Then by Proposition~\ref{prop:heat-increases-diffuseness}, we have that for $\varepsilon>0,$
    \[\mu_\varepsilon \preceq \nu_\varepsilon,\]
    and by Young's convolution inequality $\mu_\varepsilon, \nu_\varepsilon \in L^2(\R^d)$. Thus by Proposition~\ref{prop:l2-general}, 
    \[\solP^u_t \mu_\varepsilon \preceq \e^{t\Delta} \nu_\varepsilon.\]
    By standard parabolic smoothing estimates, for fixed $t>0$, we have that in $L^2(\R^d)$
    \[\solP^u_t \mu_\varepsilon \stackrel{\varepsilon \to0}{\to} \solP^u_t \mu\quad\text{and}\quad \e^{t\Delta}\nu_\varepsilon \stackrel{\varepsilon \to0}{\to} \e^{t\Delta} \nu.\]
    Thus using Lemma~\ref{lem:concentration-continuity}, we can pass to the limit and conclude.
\end{proof}

\subsection{Proofs of Propositions~\ref{prop:advection-maintains-concentration} and~\ref{prop:heat-increases-diffuseness}}
\label{ss:proof-heat-diffuses}

We first verify the simple fact that rearranging does not affect the modulus of absolute continuity.

\begin{lemma}
    \label{lem:concentration-representation} 
    For every $\mu \in \mathcal{M}_+(\R^d)$, we have 
    \[C_\mu = C_{\mu^*}.\]
\end{lemma}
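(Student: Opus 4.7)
The first step is to reduce to the absolutely continuous case. Applying Remark~\ref{rem:C_mu-decomposition} to both $\mu(\dd x) = f(x)\dd x + \beta(\dd x)$ and $\mu^*(\dd x) = f^*(x)\dd x + \beta(\R^d)\delta_0(\dd x)$, we obtain $C_\mu = C_f + \beta(\R^d)$ and $C_{\mu^*} = C_{f^*} + \beta(\R^d)$. Thus the lemma reduces to showing $C_f(\alpha) = C_{f^*}(\alpha)$ for $f \in L^1_+(\R^d)$ and all $\alpha \geq 0$.

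I would then show that both quantities equal the explicit target
\[I(\alpha) := \int_0^\infty \min\!\bigl(\alpha, |\{f > t\}|\bigr) \dd t.\]
For $C_{f^*}(\alpha)$: since $f^*$ is symmetric decreasing, the ball $B_r$ with $|B_r| = \alpha$ is nested with every super-level set $\{f^* > t\}$ (which is itself a centered ball), so the supremum in~\eqref{eq:C-mu-def} is attained at $B_r$, and the layer-cake representation combined with $|\{f^* > t\}| = |\{f > t\}|$ yields $C_{f^*}(\alpha) = I(\alpha)$. For the upper bound $C_f(\alpha) \leq I(\alpha)$: for any Borel $E$ with $|E| = \alpha$, the trivial bound $|E \cap \{f > t\}| \leq \min(\alpha, |\{f > t\}|)$ combined with layer-cake immediately gives $\int_E f \leq I(\alpha)$. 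One could equivalently invoke the limiting case~\eqref{eq:rearrange-test} of the Riesz rearrangement inequality applied to $f$ and $\indc_E$, using $\indc_E^* = \indc_{B_r}$.

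For the matching lower bound $C_f(\alpha) \geq I(\alpha)$, I would exhibit an almost-optimal super-level-set-based competitor. Setting $\tau := \inf\{t \geq 0 : |\{f > t\}| \leq \alpha\}$, right-continuity of $t \mapsto |\{f > t\}|$ gives $|\{f > \tau\}| \leq \alpha \leq |\{f \geq \tau\}|$, and non-atomicity of the Lebesgue measure produces a Borel set $E$ with $\{f > \tau\} \subseteq E \subseteq \{f \geq \tau\}$ and $|E| = \alpha$ (with the convention that $\{f \geq 0\} = \R^d$ absorbs any leftover mass when $\alpha > |\{f > 0\}|$). For this $E$, the intersections $|E \cap \{f > t\}|$ equal $|\{f > t\}|$ for $t \geq \tau$ and equal $\alpha$ for $t < \tau$, so one more layer-cake computation yields $\int_E f = I(\alpha)$. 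The main---and essentially only---subtlety is the potential jump of the distribution function at $\tau$, which is handled precisely by the non-atomicity of Lebesgue measure; no real obstacle arises.
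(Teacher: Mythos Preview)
Your proof is correct and takes essentially the same approach as the paper: reduce to densities, obtain the upper bound from the Hardy--Littlewood inequality (your layer-cake bound $|E\cap\{f>t\}|\leq\min(\alpha,|\{f>t\}|)$ is equivalent to the paper's use of~\eqref{eq:rearrange-test}), and construct the optimal competitor as the super-level set $\{f>\tau\}$ padded by a piece of the level set $\{f=\tau\}$ to reach measure~$\alpha$. Your routing of both sides through the explicit target $I(\alpha)=\int_0^\infty\min(\alpha,|\{f>t\}|)\,\dd t$ is a clean presentational choice---the paper instead compares directly against $\int_{B_{r_\alpha}}f^*$ and selects the level-set piece via an intersection with a ball $B_R$ rather than invoking non-atomicity abstractly---but the underlying argument is the same.
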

\begin{proof}
    The definition of $\mu^*$ allows us to readily reduce to the case that $\mu(\dd x) = f(x)\dd x$ for some $f \in L^1_+$. In which case, the statement becomes that
    \[
        \sup_{|E| =\alpha} \int_E f(x) \dd x =\int_{B_{r_\alpha}} f^*(x) \dd x,
    \]
    where $r_\alpha:=(|B_1|^{-1}\alpha)^{1/d}.$ Note that by~\eqref{eq:rearrange-test}, we have that for $|E| = \alpha$ 
    \[\int_E f(x)\dd x \leq \int_{E^*} f^*(x)\dd x = \int_{B_{r_\alpha}} f^*(x)\dd x.\]
    On the other hand,
    \[
    \int_{B_{r_\alpha}} f^*(x)\dd x 
    = f^*(r_\alpha)\big|\{f^* = f^*(r_\alpha)\} \cap B_{r_\alpha}\big| 
    +\int_{\{f^*> f^*(r_\alpha)\}} f^*(x)\dd x.
    \]
    whereby $f^*(r_\alpha)$ we mean $f^*(r_\alpha,0,\dots,0).$  
    One can then directly see by the layer-cake representation,
    \[\int_{\{f^*> f^*(r_\alpha)\}} f^*(x)\dd x = \int_{\{f>f^*(r_\alpha)\}} f(x)\dd x.\]
    Thus if $|\{f^* = f^*(r_\alpha)\}| = 0$, we can conclude.  Otherwise 
    \[|\{f^* = f^*(r_\alpha)\}| = |\{f = f^*(r_\alpha)\}| \ne 0.\]
    In this case, define
    \[L_{\alpha,R} := \{f = f^*(r_\alpha)\} \cap B_R.\]
    Note that $|L_{\alpha, 0}| =0, |L_{\alpha,\infty}| =| \{f = f^*(r_\alpha)\}|,$ and $|L_{\alpha,R}|$ is continuous in $R$. Thus we can choose $R_\alpha$ such that 
    \[|L_{\alpha,R_\alpha}| = \big|\{f^* = f^*(r_\alpha)\}\cap B_{r_\alpha}\big|.\]
    Then we have that 
    \[\int_{B_{r_\alpha}} f^*(x)\dd x = f^*(r_\alpha) |L_{\alpha,R_\alpha}| + \int_{\{f> f^*(r_\alpha)\}} f(x)\dd x  = \int_{\{f> f^*(r_\alpha)\} \cup L_{\alpha,R_\alpha}} f(x)\dd x,\]
    and
    \[|\{f> f^*(r_\alpha)\} \cup L_{\alpha,R_\alpha}| = |\{f^* > f^*(r_\alpha)\}| + \big|\{f^* = f^*(r_\alpha)\} \cap B_{r_\alpha}\big| = |B_{r_\alpha}| = \alpha,\]
    thus allowing us to conclude.
\end{proof}

We can now prove Proposition~\ref{prop:advection-maintains-concentration}.

\begin{proof}[Proof of Proposition~\ref{prop:advection-maintains-concentration}]
    Since $u$ is divergence-free, we have that $\big(\e^{-tu\cdot\nabla}f)^* = f^*$, thus
    \[C_f = C_{f^*} = C_{(\e^{-tu\cdot\nabla}f)^*} = C_{\e^{-tu\cdot\nabla}f},\]
    as claimed.
\end{proof}

We want to slightly generalize Theorem~\ref{thm:riesz-rearrangement} to allow $f$ and $h$ to be measures in order to prove Proposition~\ref{prop:heat-increases-diffuseness}.

\begin{corollary}
    Let $\mu,\nu \in \mathcal{M}_+(\R^d)$ and $g : \R^d \to [0,\infty)$ bounded and continuous. Then
    \begin{equation}
        \label{eq:riesz-measure}
    \iint g(x-y)\mu(\dd x) \nu(\dd y) \leq \iint g^*(x-y) \mu^*(\dd x)\nu^*(\dd y).
    \end{equation}
\end{corollary}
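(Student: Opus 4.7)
The plan is to reduce this measure version of the Riesz rearrangement inequality to Theorem~\ref{thm:riesz-rearrangement} via the Lebesgue decomposition of $\mu$ and $\nu$. Write $\mu = f_\mu\dd x + \beta_\mu$ and $\nu = f_\nu\dd y + \beta_\nu$ with $f_\mu,f_\nu \in L^1_+(\R^d)$ and $\beta_\mu,\beta_\nu$ singular with respect to Lebesgue measure, so that by definition $\mu^* = f_\mu^*\dd x + \beta_\mu(\R^d)\delta_0$ and $\nu^* = f_\nu^*\dd x + \beta_\nu(\R^d)\delta_0$. Expanding $\mu\otimes\nu$ and $\mu^*\otimes\nu^*$ as sums of the four combinations of their absolutely continuous and singular parts, and using the symmetry $g^*(-z) = g^*(z)$ and the identity $g^*(0) = \|g\|_\infty$ to simplify integrals against $\delta_0$, the claim~\eqref{eq:riesz-measure} reduces to the four pairwise inequalities
\begin{align*}
\iint g(x-y) f_\mu(x) f_\nu(y)\,\dd x\dd y &\leq \iint g^*(x-y) f_\mu^*(x) f_\nu^*(y)\,\dd x\dd y, \\
\iint g(x-y) f_\mu(x)\,\dd x\,\beta_\nu(\dd y) &\leq \beta_\nu(\R^d)\int g^*(z) f_\mu^*(z)\,\dd z, \\
\iint g(x-y) f_\nu(y)\,\dd y\,\beta_\mu(\dd x) &\leq \beta_\mu(\R^d)\int g^*(z) f_\nu^*(z)\,\dd z, \\
\iint g(x-y)\,\beta_\mu(\dd x)\,\beta_\nu(\dd y) &\leq g^*(0)\,\beta_\mu(\R^d)\,\beta_\nu(\R^d).
\end{align*}

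The first inequality follows from Theorem~\ref{thm:riesz-rearrangement} applied to the triple $(f_\mu,\, g\chi_R,\, f_\nu)$, where $\chi_R$ is a smooth radial cutoff with $\chi_R \uparrow 1$ inserted to make the middle factor integrable; passing $R \to \infty$ is handled by monotone convergence, with $(g\chi_R)^* \uparrow g^*$ pointwise almost everywhere following from the layer-cake representation and the monotonicity $\{g\chi_R > t\} \uparrow \{g > t\}$. The second and third rely on the translation invariance of the symmetric decreasing rearrangement: fixing $y$, the change of variables $z = x-y$ combined with the bilinear form~\eqref{eq:rearrange-test} applied to $g\chi_R$ and the translate $f_\mu(\,\cdot\, + y)$ (whose rearrangement is still $f_\mu^*$), followed by $R \to \infty$, yields the pointwise-in-$y$ bound $\int g(x-y) f_\mu(x)\,\dd x \leq \int g^*(z) f_\mu^*(z)\,\dd z$, which one then integrates against $\beta_\nu$ (continuity of the left-hand side in $y$ follows from dominated convergence, ensuring Borel measurability). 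The fourth is immediate from the pointwise bound $g(x-y) \leq \|g\|_\infty$. Summing the four inequalities gives~\eqref{eq:riesz-measure}.

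The argument is essentially elementary, so the main hurdle is the bookkeeping required to match each of the four components of $\mu\otimes\nu$ with the corresponding component of $\mu^*\otimes\nu^*$. An alternative approach via mollifying $\mu$ and $\nu$ to obtain densities would sidestep the splitting but introduce two new difficulties: establishing the continuity of rearrangement under weak convergence (that $\mu_\varepsilon^* \rightharpoonup \mu^*$), and dealing with the fact that $g^*$ is only lower semi-continuous a priori (though in fact continuous here, since $g$ is continuous on the connected set $\R^d$). The term-by-term decomposition above bypasses both of these complications.
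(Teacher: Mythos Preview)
Your proof is correct and follows essentially the same approach as the paper's: both use the Lebesgue decomposition to split $\mu\otimes\nu$ into four pieces, apply~\eqref{eq:riesz-ineq} to the absolutely continuous part, use~\eqref{eq:rearrange-test} together with translation invariance for the two mixed terms, and the trivial bound $g\leq\|g\|_\infty=g^*(0)$ for the purely singular term. The only difference is that you insert a cutoff $\chi_R$ to force $g$ into $L^1$ before invoking Theorem~\ref{thm:riesz-rearrangement}, whereas the paper applies~\eqref{eq:riesz-ineq} and~\eqref{eq:rearrange-test} directly to the bounded $g$; your version is therefore slightly more careful about the stated hypotheses of that theorem, but the underlying argument is the same.
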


\begin{proof}
    Let $\mu,\nu$ have Lebesgue decompositions
    $\mu(\dd x) = f(x) \dd x + \alpha(\dd x)$ and $\nu(\dd x) = h(x)\dd x + \beta(\dd x)$.
    Then by~\eqref{eq:riesz-ineq} and then~\eqref{eq:rearrange-test},
    \begin{align*}
        \iint g(x-y)\mu(\dd x) \nu(\dd y) &= \iint g(x-y)f(x)h(y) \dd x \dd y + \iint g(x-y)\alpha(\dd x) h(y)\dd y 
        \\&\qquad+ \iint g(x-y)f(x)\dd x \beta(\dd y) + \iint g(x-y)\alpha(\dd x) \beta(\dd y)
        \\&\leq \iint g^*(x-y)f^*(x)h^*(y) \dd x \dd y + \alpha(\R^d) \sup_x \int g(x-y) h(y)\dd y  
        \\&\qquad+ \beta(\R^d) \sup_y \int g(x-y) f(x)\dd x + \alpha(\R^d) \beta(\R^d) \sup_x g(x)
        \\&\leq \iint g^*(x-y)f^*(x)h^*(y) \dd x \dd y + \alpha(\R^d)  \int g^*(y) h^*(y)\dd y  
        \\&\qquad+ \beta(\R^d)\int g^*(x) f^*(x)\dd x + \alpha(\R^d) \beta(\R^d) g^*(0)
        \\&= \iint g^*(x-y) \mu^*(\dd x) \nu^*(\dd y).
    \end{align*}
    This is~\eqref{eq:riesz-measure}.
\end{proof}

We also need the following simple fact about the convolution of symmetric decreasing functions, whose proof is direct from the layer cake representation and the special case of the convolution of indicators on balls. 

\begin{lemma}
\label{lem:convolution-of-symm-dec}
    Suppose $f,g\in L^1_+(\R^d)$ and are symmetric decreasing. Then the convolution $f \star g$ is also symmetric decreasing.
\end{lemma}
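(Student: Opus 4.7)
My plan is to reduce the statement to the case of indicator functions of origin-centered balls via the layer-cake representation. Since $f$ is symmetric decreasing, every superlevel set $\{f > s\}$ is a (possibly empty) open ball $B_{r_f(s)}$ centered at the origin; the same is true of $g$. Writing $f(y) = \int_0^\infty \indc_{B_{r_f(s)}}(y) \, \dd s$ and similarly for $g$, inserting these into the definition of $f \star g$, and using Tonelli (valid since all integrands are nonnegative), I obtain
$$(f \star g)(x) = \int_0^\infty \int_0^\infty (\indc_{B_{r_f(s)}} \star \indc_{B_{r_g(t)}})(x) \, \dd s \, \dd t.$$
Both radiality and monotonicity in $|x|$ are preserved under nonnegative superpositions (by rotation invariance of each integrand and pointwise monotonicity, respectively), so it suffices to show that for each $a, b > 0$ the function $x \mapsto (\indc_{B_a} \star \indc_{B_b})(x)$ is symmetric decreasing.

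For the base case, using that $B_b$ is origin-symmetric,
$$(\indc_{B_a} \star \indc_{B_b})(x) = \int \indc_{B_a}(y) \indc_{B_b}(y-x) \, \dd y = |B_a \cap B_b(x)|,$$
where $B_b(x)$ denotes the ball of radius $b$ centered at $x$. Radiality in $x$ follows from rotation invariance of Lebesgue measure, since $R(B_a \cap B_b(x)) = B_a \cap B_b(Rx)$ for any rotation $R$. For monotonicity, by radial reduction I may assume $x = t e_1$ and show that $t \mapsto |B_a \cap B_b(te_1)|$ is non-increasing on $[0, \infty)$. Slicing perpendicular to the $e_1$-axis, for each $y' \in \R^{d-1}$ with $|y'| < \min(a,b)$ the slice of $B_a \cap B_b(te_1)$ is the intersection of the 1D intervals $(-A, A)$ and $(t-B, t+B)$, where $A = \sqrt{a^2 - |y'|^2}$ and $B = \sqrt{b^2 - |y'|^2}$; a direct case analysis shows that the length of this intersection, namely $\min(2\min(A,B), (A+B-t)_+)$, is non-increasing in $t \geq 0$. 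Integrating over $y'$ via Fubini preserves this monotonicity.

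The argument is essentially routine once the layer-cake reduction is in place; the only step that warrants any care is the final 1D computation of the ball-intersection volume, and even there the case analysis is short. If desired, one could bypass this explicit slicing by a reflection argument across the perpendicular bisector hyperplane between the two ball centers, but the slicing approach is the most self-contained.
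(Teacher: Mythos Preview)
Your proof is correct and follows exactly the approach indicated in the paper, which simply states that the lemma is ``direct from the layer cake representation and the special case of the convolution of indicators on balls.'' You have carefully filled in the details of both the layer-cake reduction and the ball-indicator base case that the paper leaves implicit.
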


% \begin{proof}
%     Using the layer-cake representation together with $f,g$ symmetric decreasing, we have that
%     \[f = \int_0^\infty \indc_{B_{r_f(t)}}\dd t \quad \text{and} \quad g = \int_0^\infty \indc_{B_{r_g(t)}} \dd t\]
%     for some decreasing functions $r_f,r_g : (0,\infty) \to [0,\infty).$ Then by linearity,
%     \[f \star g = \int_0^\infty \int_0^\infty \indc_{B_{r_f(t)}} \star \indc_{B_{r_g(s)}}\dd tds.\]
%     Thus since the sum of symmetric decreasing functions is symmetric decreasing, it suffices to prove that for any $r,s,$ $\indc_{B_r} \star \indc_{B_s}$ is symmetric decreasing. Then we have that
%     \[\indc_{B_r} \star \indc_{B_s}(x) = |B_r(0) \cap B_s(x)|,\]
%     which is readily seen to be symmetric decreasing.
% \end{proof}

The following is then direct from Lemma~\ref{lem:convolution-of-symm-dec} and that the heat kernel is symmetric decreasing.
\begin{lemma}
\label{lem:heat-preserves-symm-dec}
    If $\mu \in \mathcal{M}_+(\R^d)$ is symmetric decreasing, then $\e^{t\Delta} \mu$ is symmetric decreasing for all $t\geq 0$.
\end{lemma}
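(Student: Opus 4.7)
The plan is to reduce the statement to Lemma~\ref{lem:convolution-of-symm-dec} by writing out the heat evolution as a convolution with the Gaussian kernel and using the Lebesgue decomposition afforded by Definition~\ref{def:symmetric-decreasing}.

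First, I would dispense with the $t=0$ case, which is immediate from the hypothesis. For $t>0$, by Definition~\ref{def:symmetric-decreasing} we may write
\[\mu(\dd x) = f(x)\dd x + c\,\delta_0(\dd x)\]
with $f$ symmetric decreasing and $c\geq 0$. Let $G_t(x) := (4\pi t)^{-d/2}\exp(-|x|^2/(4t))$ denote the heat kernel on $\R^d$. By linearity of the heat semigroup and the explicit representation $\e^{t\Delta}\mu = G_t * \mu$, we obtain the density identity
\[\e^{t\Delta}\mu(x) = (G_t * f)(x) + c\,G_t(x),\]
and in particular $\e^{t\Delta}\mu$ has no singular part with respect to Lebesgue measure.

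Second, I would verify the two ingredients needed to conclude that this density is symmetric decreasing. The heat kernel $G_t$ is manifestly radial and nonincreasing in $|x|$, hence symmetric decreasing. Since $f$ is symmetric decreasing by hypothesis and $G_t \in L^1_+(\R^d)$, Lemma~\ref{lem:convolution-of-symm-dec} gives that $G_t * f$ is symmetric decreasing. Finally, the sum of two radial functions, each nonincreasing in $|x|$, is radial and nonincreasing in $|x|$, so $(G_t*f) + c\,G_t$ is symmetric decreasing. In view of Definition~\ref{def:symmetric-decreasing} (taking the singular coefficient to be $0$), this yields the claim.

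There is no serious obstacle here; the only subtlety is bookkeeping around the possible atom of $\mu$ at the origin, which is innocuous because it is smoothed out by $G_t$ into the radially decreasing function $c\,G_t$ for any $t>0$, and can simply be absorbed into the absolutely continuous part of $\e^{t\Delta}\mu$.
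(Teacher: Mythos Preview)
Your proof is correct and follows precisely the approach the paper indicates: the paper's proof is the single sentence ``direct from Lemma~\ref{lem:convolution-of-symm-dec} and that the heat kernel is symmetric decreasing,'' and you have simply unpacked this, correctly handling the atom at the origin via Definition~\ref{def:symmetric-decreasing}.
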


\begin{lemma}
\label{lem:test-with-monotone}
    Let $\mu,\nu \in \mathcal{M}_+(\R^d)$ be symmetric decreasing. Let $f: \R^d \to [0,\infty)$ be symmetric decreasing and let $g : \R^d \to [0,\infty)$ be symmetric \emph{increasing}, that is $-g$ is symmetric decreasing.
    Then if $\mu \preceq \nu$,
    \begin{equation}
        \label{eq:test-with-decreasing}
    \int f(x)\,\mu(\dd x) \leq \int f(x)\, \nu(\dd x).
    \end{equation}
    If additionally, $\mu(\R^d) = \nu(\R^d)$, then
    \begin{equation}
        \label{eq:test-with-increasing}
    \int g(x) \,\mu(\dd x) \geq \int g(x) \,\nu(\dd x).
    \end{equation}
\end{lemma}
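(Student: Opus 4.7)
The plan is to reduce both inequalities to comparing $\mu$ and $\nu$ on centered balls, which is exactly the information encoded by $\mu\preceq\nu$ once $\nu$ is symmetric decreasing. Lemma~\ref{lem:concentration-representation} tells us that if $\nu$ is symmetric decreasing then the supremum in the definition of $C_\nu(\alpha)$ is attained on the centered ball of volume $\alpha$, i.e. $C_\nu(|B_r|)=\nu(B_r)$. Combining this with the trivial bound $\mu(B_r)\le C_\mu(|B_r|)$ and the hypothesis $\mu\preceq\nu$, one obtains $\mu(B_r)\le\nu(B_r)$ for every $r\ge 0$. All that remains is to represent both integrals in a form that only sees $\mu$ and $\nu$ evaluated on centered balls.

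For the first inequality, I would apply the layer-cake representation to $f$ and then Tonelli:
\[\int f(x)\,\mu(\dd x) = \int_0^\infty \mu(\{f>t\})\,\dd t.\]
Since $f$ is symmetric decreasing, each superlevel set $\{f>t\}$ is a centered ball $B_{r(t)}$ (possibly empty, possibly all of $\R^d$), so by the observation above $\mu(\{f>t\})\le\nu(\{f>t\})$ for every $t>0$. Integrating in $t$ yields \eqref{eq:test-with-decreasing}. Note this argument uses only that $\nu$ is symmetric decreasing, not $\mu$.

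For the second inequality, I would reduce to the first by truncation. Set $g_M:=\min(g,M)$; then $g_M$ is bounded and symmetric increasing, so the function $M-g_M$ is nonnegative and symmetric decreasing. Applying \eqref{eq:test-with-decreasing} to $M-g_M$ and using the hypothesis $\mu(\R^d)=\nu(\R^d)$ to cancel the constant term gives $\int g_M\,\dd\mu\ge\int g_M\,\dd\nu$. Sending $M\to\infty$ and applying monotone convergence on both sides yields \eqref{eq:test-with-increasing}, including the case where either integral is infinite.

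There are no substantive obstacles here. The only minor technical care concerns the fact that superlevel sets of a symmetric decreasing function can be open balls, closed balls, or all of $\R^d$; since these differ only by sets of Lebesgue measure zero, they give the same values of $C_\mu$ and of the integrals above, so the argument is unaffected. The truncation-plus-monotone-convergence step handles the possibility that $\int g\,\dd\mu$ or $\int g\,\dd\nu$ is infinite cleanly.
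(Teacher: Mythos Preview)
Your proof is correct and follows essentially the same route as the paper's: both reduce to the pointwise inequality $\mu(B_r)\le\nu(B_r)$ via the layer-cake representation of~$f$. For~\eqref{eq:test-with-increasing} the paper instead writes $g(x)=\int_0^\infty(1-\indc_{B_{r(t)}}(x))\,\dd t$ directly and compares $\mu(\R^d)-\mu(B_{r(t)})$ to $\nu(\R^d)-\nu(B_{r(t)})$, whereas you truncate and reduce to~\eqref{eq:test-with-decreasing}; the two arguments are interchangeable in content and difficulty.
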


\begin{proof}
    Note that, since $f$ is symmetric decreasing, there is some decreasing function $r : (0,\infty) \to [0,\infty)$ such that 
    \[f(x) = \int_0^\infty \indc_{B_{r(t)}}(x)\dd t.\]
    Then 
    \[
        \int f(x)\,\mu(\dd x)= \int_0^\infty  \mu(B_{r(t)})\dd t \leq \int_0^\infty \nu(B_{r(t)})\dd t = \int f(x)\,\nu(\dd x),\]
    where we use that $\mu \preceq \nu$ and that $\mu,\nu$ are symmetric decreasing for the middle inequality. This gives~\eqref{eq:test-with-decreasing}.

    For~\eqref{eq:test-with-increasing}, we suppose $\mu(\R^d) = \nu(\R^d)$. Then by $g$ symmetric increasing, we have that for some $r : (0,\infty) \to [0,\infty]$ increasing,
    \[g(x) = \int_0^\infty 1- \indc_{B_{r(t)}}(x)\dd t.\]
    Then
    \[\int g(x)\mu(\dd x) = \int_0^\infty \mu(\R^d) - \mu(B_{r(t)})\dd t \geq \int_0^\infty \nu(\R^d) - \nu(B_{r(t)})\dd t = \int g(x)\nu(\dd x),\]
    as desired.
\end{proof}

\begin{proof}[Proof of Proposition~\ref{prop:heat-increases-diffuseness}]
    Fix $\mu$ and $\nu$ as in the statement. Also fix $\alpha>0$ and let $r_\alpha = (|B_1|^{-1} \alpha)^{1/d}$. Then for any Borel set $E$ such that $|E|=\alpha$, we have that
    \[\int_E \e^{t\Delta}\mu(\dd x) = \iint \indc_E(x) \Phi_t(x-y) \mu(\dd y) \leq  \iint \indc_E^*(x) \Phi_t^*(x-y) \mu^*(\dd y) = \int \indc_{B_{r_\alpha}}\star \Phi_t(y) \mu^*(\dd y), \]
 where $\Phi_t$ is the heat kernel and we use the Riesz rearrangement inequality for measures~\eqref{eq:riesz-measure} for the inequality. Noting that $\mu^* \preceq \nu$ by Lemma~\ref{lem:concentration-representation}, Lemma~\ref{lem:convolution-of-symm-dec} gives that $\indc_{B_{r_\alpha}}\star \Phi_t$ is symmetric decreasing so Lemma~\ref{lem:test-with-monotone} gives 
 \[\int \indc_{B_{r_\alpha}}\star \Phi_t(y) \mu^*(\dd y) \leq \int \indc_{B_{r_\alpha}}\star \Phi_t(y) \nu(\dd y) = \int_{B_{r_\alpha}} \e^{t\Delta} \nu(\dd y) = C_{\e^{t\Delta} \nu}(\alpha).\]
Taking the supremum over $|E| = \alpha$, we conclude. 
\end{proof}

\section{Proof of Corollary~\ref{cor:main-cor}}
We first precisely specify what we mean by the variance $\Var$ and the entropy $H$.

\begin{definition}
\label{def:variance-entropy-def}
    For a probability measure $\mu \in \mathcal{M}_+(\R^d)$, define
    \[\Var(\mu) := \int |x|^2\,\mu(\dd x) - \Big(\int x\,\mu(\dd x)\Big)^2,\]
    with the convention that $\Var(\mu) = \infty$ if $\int |x|\,\mu(\dd x) = \infty$. For a probability density $f \in L^1_+(\R^d)$ with $f\log f \in L^1(\R^d)$, define
    \[ H(f) := - \int f\log f\dd x.\]
\end{definition}

\label{sec:proof-of-cor}

We now note two inequalities that will allow us to relate the relation $\preceq$ to $L^p$ norms, variance, and entropy. The following is a version of~\cite[249, 250]{hardy_inequalities_1952}; we will use this to prove the relation on $L^p$ norms and entropy.

\begin{proposition}
\label{prop:convex-dom}
    Suppose $F : [0,\infty) \to \R, F \in C^2(0,\infty)$ is convex, $F(0) = 0$, and $F'(0) > - \infty$. Let $f,g \in L^1_+(\R^d)$ be such that 
    $\int f(x)\dd x =\int g(x)\dd x$.
    If $f\preceq g$, then 
    \[\int F(f(x))\dd x \leq \int F(g(x))\dd x.\]
\end{proposition}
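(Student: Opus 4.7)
The plan is to reduce the claim to a family of pointwise-in-$t$ inequalities
\[\int (f-t)_+\dd x \leq \int (g-t)_+\dd x, \qquad t \geq 0,\]
by writing $F$ as a superposition of the convex ``hockey-stick'' functions $s \mapsto (s-t)_+$. Convexity with $F \in C^2(0,\infty)$, $F(0)=0$, and $F'(0) > -\infty$ gives, for every $s \geq 0$, the representation
\[F(s) = F'(0)\,s + \int_0^\infty (s-t)_+\, F''(t)\dd t,\]
obtained by integrating $F'(u) = F'(0) + \int_0^u F''(t)\dd t$ from $0$ to $s$ and swapping the order of integration; rigorously, one integrates from $\varepsilon$ to $s$ first and then sends $\varepsilon \to 0^+$, using $F'(0) > -\infty$ and $F'' \geq 0$. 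Integrating this identity against $f$ (resp.\ $g$) and applying Fubini (legitimate since the $F''$-weighted integrand is nonnegative) yields
\[\int F(f)\dd x = F'(0)\int f\dd x + \int_0^\infty F''(t) \int (f-t)_+\dd x\, \dd t,\]
with the analogous formula for $g$.

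The core step is the identity
\[\int (h-t)_+\dd x = \sup_{\alpha \geq 0}\bigl(C_h(\alpha) - t\alpha\bigr) \qquad \text{for every } h \in L^1_+(\R^d) \text{ and } t \geq 0.\]
For ``$\geq$'', any Borel $E$ with $|E|=\alpha$ satisfies $\int_E h \leq \int(h-t)_+\dd x + t\alpha$, hence $C_h(\alpha) - t\alpha \leq \int (h-t)_+\dd x$. For ``$\leq$'', I would take $E^\star = \{h > t\}$ (of finite measure by Chebyshev when $t>0$; the case $t=0$ is immediate); the standard ``swap'' argument comparing $\int_{E'} h$ to $\int_{E^\star} h$ for $|E'| = |E^\star|$ shows $C_h(|E^\star|) = \int_{\{h>t\}} h$, so $C_h(|E^\star|) - t|E^\star| = \int (h-t)_+\dd x$. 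Granted this identity, the hypothesis $f \preceq g$ gives $C_f(\alpha) \leq C_g(\alpha)$ for all $\alpha$, and thus $\int (f-t)_+\dd x \leq \int (g-t)_+\dd x$ for every $t \geq 0$.

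To finish, I would exploit $\int f\dd x = \int g\dd x$ to cancel the $F'(0)$ terms when subtracting the two Fubini identities, obtaining
\[\int F(g)\dd x - \int F(f)\dd x = \int_0^\infty F''(t)\Bigl[\int(g-t)_+\dd x - \int(f-t)_+\dd x\Bigr] \dd t \geq 0,\]
since $F'' \geq 0$ and the bracket is nonnegative by the previous step. The main obstacle I anticipate is purely technical bookkeeping: justifying the integral representation of $F$ at $s=0$ when $F''$ may fail to be integrable near the origin, and making sense of $\int F(f)\dd x$ as a possibly infinite element of $[F'(0)\int f\dd x, +\infty]$; once this is handled, the proof is a chain of layer-cake identities and monotonicity arguments with no further analytic content.
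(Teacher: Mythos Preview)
Your proof is correct and follows the same overall skeleton as the paper's: represent $F$ as a superposition of hockey-stick functions, reduce to the inequality $\int (f-t)_+\dd x \leq \int (g-t)_+\dd x$, and use $\int f = \int g$ to cancel the linear part.

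The one place you diverge is in establishing the hockey-stick inequality. The paper packages $\int (f-t)_+\dd x$ as $\Gamma_f(t)$ and proves $\Gamma_f \leq \Gamma_g$ by passing to the symmetric decreasing rearrangements $f^*, g^*$ (via Lemma~\ref{lem:concentration-representation}), choosing the radius $r(t)$ at which $f^*$ crosses level $t$, and comparing integrals over $B_{r(t)}$. Your route is instead a direct Legendre-type duality: you identify $\int (h-t)_+\dd x = \sup_{\alpha\geq 0}(C_h(\alpha) - t\alpha)$ by the swap argument on $\{h>t\}$, so the inequality follows immediately from $C_f \leq C_g$ without ever invoking rearrangements. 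This is cleaner and more self-contained; the paper's version has the advantage of fitting into the rearrangement framework already set up for other results, but for this proposition in isolation your argument is shorter. The technical caveats you flag (integrability of $F''$ near $0$, interpreting $\int F(f)\dd x \in (-\infty,+\infty]$) are handled exactly as you indicate and are glossed over in the paper as well.
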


For the next proposition, we first need to define the \textit{symmetric increasing rearrangement.} The definition is motivated by the fact that, given a function $\varphi:\R^d\rightarrow[0,\infty)$, we have
\[\varphi(x) = \int_0^\infty 1 - \indc_{\{ \varphi \leq t\}}(x)\dd t.\]

\begin{definition}
For a function $\varphi:\R^d\rightarrow[0,\infty)$ with finite-measure sublevel sets---that is $|\{\varphi\leq t\}|<\infty$ for all $t\geq 0$---we define the \textit{symmetric increasing rearrangement of $\varphi$}, denoted~$\varphi^\vee$, by
\[\varphi^\vee (x) := \int_0^\infty 1-  \indc_{\{\varphi \leq t\}^*}(x)\dd t.\]
\end{definition}

The following is a version of~\cite[Theorem 368]{hardy_inequalities_1952}. We will use this to prove the relation on variances.

\begin{proposition}
\label{prop:test-with-increasing-rearrange}
    Suppose $\mu,\nu \in \mathcal{M}_+(\R^d)$ are probability measures such that $\nu$ is symmetric decreasing and $\mu \preceq \nu$. Let $\varphi : \R^d \to [0,\infty)$ have finite-measure sublevel sets. Then
    \[
    \int \varphi(x)\,\mu(\dd x) 
    \geq \int \varphi^\vee(x)\,\nu(\dd x).
    \]
\end{proposition}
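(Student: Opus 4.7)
The plan is to apply the layer-cake representation on both sides of the claimed inequality, reducing matters to a pointwise-in-$t$ comparison of $\mu$-measures and $\nu$-measures of sublevel sets of~$\varphi$. For any Borel-measurable $\psi : \R^d \to [0,\infty)$ and any probability measure $\rho \in \mathcal{M}_+(\R^d)$, an application of Tonelli's theorem to the layer-cake representation yields
\[
\int \psi(x)\,\rho(\dd x) = \int_0^\infty \rho(\{\psi > t\})\,\dd t = \int_0^\infty \bigl(1 - \rho(\{\psi \leq t\})\bigr)\,\dd t.
\]
Taking $\psi = \varphi, \rho = \mu$ on the left-hand side, and applying Tonelli to the definition $\varphi^\vee(x) = \int_0^\infty (1 - \indc_{\{\varphi \leq t\}^*}(x))\,\dd t$ integrated against $\nu$ on the right-hand side, the desired inequality reduces to showing
\[
\mu(\{\varphi \leq t\}) \leq \nu(\{\varphi \leq t\}^*) \qquad \text{for every } t \geq 0.
\]

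To establish this pointwise level-set inequality, set $\alpha_t := |\{\varphi \leq t\}|$, which is finite by hypothesis. Directly from the definition of $C_\mu$, we have $\mu(\{\varphi \leq t\}) \leq C_\mu(\alpha_t)$, and the assumption $\mu \preceq \nu$ gives $C_\mu(\alpha_t) \leq C_\nu(\alpha_t)$. It then suffices to identify $C_\nu(\alpha_t)$ with $\nu(\{\varphi \leq t\}^*) = \nu(B_{r_{\alpha_t}})$, where $r_{\alpha_t}$ is the radius for which $|B_{r_{\alpha_t}}| = \alpha_t$. This is a bathtub-principle statement for symmetric decreasing $\nu$, and it follows from results already at hand: writing $\nu(\dd x) = f(x)\,\dd x + c\,\delta_0(\dd x)$ with $f = f^*$, Remark~\ref{rem:C_mu-decomposition} gives $C_\nu(\alpha_t) = C_f(\alpha_t) + c$, while the computation in the proof of Lemma~\ref{lem:concentration-representation} identifies $C_f(\alpha_t)$ with $\int_{B_{r_{\alpha_t}}} f\,\dd x$. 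Summing yields $C_\nu(\alpha_t) = \nu(B_{r_{\alpha_t}})$.

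Combining the level-set inequality with the two layer-cake representations and integrating over $t$ proves the claim. I do not anticipate a genuine obstacle: the argument is essentially bookkeeping once one observes that $\varphi^\vee$ is defined precisely so that its layer-cake representation mirrors that of $\varphi$, only with the sublevel sets replaced by their symmetric rearrangements. The one small point of care is the degenerate level $\alpha_t = 0$, where $\{\varphi \leq t\}^*$ should be interpreted as $\{0\}$ (rather than the empty set) so that $\nu(\{\varphi \leq t\}^*) = c$ matches the value $C_\nu(0) = c$ dictated by Remark~\ref{rem:C_mu-decomposition}; once this convention is fixed, the pointwise inequality holds uniformly in~$t$ and the integration step is immediate.
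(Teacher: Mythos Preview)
Your argument is correct. It differs from the paper's proof in that you bypass the intermediate rearrangement~$\mu^*$ entirely: the paper first invokes Lemma~\ref{lem:Hardy-Littlewood-Polya} to pass from $\int\varphi\,\dd\mu$ to $\int\varphi^\vee\,\dd\mu^*$, and then Lemma~\ref{lem:test-with-monotone} (the symmetric-increasing half) to pass from $\mu^*$ to $\nu$. You instead layer-cake both sides in one stroke and reduce to the level-set inequality $\mu(\{\varphi\le t\})\le C_\mu(\alpha_t)\le C_\nu(\alpha_t)=\nu(B_{r_{\alpha_t}})$, using only the definition of~$C_\mu$ and the bathtub identification for symmetric decreasing~$\nu$. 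Your route is more direct for this particular statement and avoids introducing $\mu^*$; the paper's route is more modular, reusing two lemmas that also serve elsewhere (Lemma~\ref{lem:test-with-monotone} in the proof of Proposition~\ref{prop:heat-increases-diffuseness}, for instance). The content is ultimately the same---both arguments boil down to layer-cake plus the fact that balls realize~$C_\nu$ when $\nu$ is symmetric decreasing---and your handling of the $\alpha_t=0$ edge case is appropriate.
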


We will provide the elementary proofs of Propositions~\ref{prop:convex-dom} and~\ref{prop:test-with-increasing-rearrange} at the end of this section. We first prove~\eqref{eq:Lp-var-ineqs}.

\begin{proof}[Proof of Corollary~\ref{cor:main-cor},~\eqref{eq:Lp-var-ineqs}]
    By Theorem~\ref{thm:main-result}, we have that $\solP_t^u \mu \preceq \e^{t\Delta} \nu$. Then the statement about $L^p$ norms for $p<\infty$ follows from Proposition~\ref{prop:convex-dom} applied with $F(t) = t^p$. The statement for $p =\infty$ is then obtained by taking a limit.

    For the inequality on the variances, we first note the inequality trivializes if
    $\int |x|\,\mu(\dd x) =\infty$.
    Otherwise, we apply Proposition~\ref{prop:test-with-increasing-rearrange} with
    \[\varphi(x) := \Big|x -\int y\,\solP_t^u\mu(\dd y)\Big|^2.\]
    Noting that $\varphi^\vee(x) = |x|^2$, we have that 
    \[\Var(\solP_t^u\mu) = \int \varphi(x)\, \solP_t^u\mu(\dd x) \geq \int \varphi^\vee(x)\,\e^{t\Delta}\nu(\dd x) = \Var(\e^{t\Delta}\nu),\]
    as claimed.
\end{proof}

We next prove~\eqref{eq:entropy-ineq}. The entropy is slightly more subtle as the integral $-\int f \log f$ is less well-behaved, potentially having both infinite negative and positive contributions. We first note that the condition of having a $\beta>0$ moment on the probability density, together with an $L^\infty$ bound, gives that the entropy density is absolutely integrable.

\begin{lemma}
\label{lem:conditions-for-entropy}
    Let $f$ be a probability density. If $f \in L^\infty(\R^d)$ and there exists $\beta > 0$ such that $\int |x|^\beta f(x) \dd x < \infty$, then $f \log f \in L^1(\R^d)$.
\end{lemma}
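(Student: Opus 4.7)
The plan is to show separately that the positive part $(f\log f)_+$ and the negative part $(f\log f)_-$ are integrable. Write $\int |f\log f|\dd x = \int_{\{f\geq 1\}} f\log f \dd x + \int_{\{f< 1\}} -f\log f \dd x$. The first piece is easy: on $\{f\geq 1\}$ we have $0\leq f\log f\leq f\log\|f\|_{L^\infty}$ (and this region is empty if $\|f\|_{L^\infty}<1$), so $\int_{\{f\geq 1\}} f\log f\dd x\leq \log_+\|f\|_{L^\infty}<\infty$ since $f$ is a probability density. The real content of the lemma is in controlling $-f\log f$ on $\{f<1\}$, where pointwise smallness is fine but the infinite volume of the region requires a decay input from the moment assumption.

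For the negative part, I will use the elementary inequality $-\log t\leq \alpha^{-1}t^{-\alpha}$ for $t\in(0,1]$ and any $\alpha>0$ (which follows from $\log s\leq s-1$ applied to $s=t^{-\alpha}$), giving
\[
-t\log t \leq \alpha^{-1} t^{1-\alpha}, \qquad t\in(0,1].
\]
Therefore, for any $R>0$,
\[
\int_{\{f<1\}} -f\log f\dd x \leq \frac{1}{\alpha}\int_{\{f<1\}\cap B_R} f^{1-\alpha}\dd x + \frac{1}{\alpha}\int_{\R^d\setminus B_R} f^{1-\alpha}\dd x.
\]
The first term is bounded by $\alpha^{-1}|B_R|<\infty$, using that $f^{1-\alpha}\leq 1$ on $\{f<1\}$.

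For the tail term, the idea is to trade powers of $f$ for powers of $|x|$ via H\"older, using the moment assumption as the decay input. With $\alpha\in(0,1)$ to be chosen, apply H\"older with exponents $p=1/(1-\alpha)$ and $q=1/\alpha$ to the factorization $f^{1-\alpha} = (f|x|^{\beta})^{1-\alpha}\cdot |x|^{-\beta(1-\alpha)}$:
\[
\int_{\R^d\setminus B_R} f^{1-\alpha}\dd x \leq \left(\int_{\R^d} f(x)|x|^{\beta}\dd x\right)^{1-\alpha}\left(\int_{\R^d\setminus B_R} |x|^{-\beta(1-\alpha)/\alpha}\dd x\right)^{\alpha}.
\]
The first factor is finite by hypothesis. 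The second factor is finite provided $\beta(1-\alpha)/\alpha>d$, i.e., $\alpha<\beta/(\beta+d)$. Choosing any such $\alpha$ (for instance $\alpha=\beta/(2(\beta+d))$) closes the argument and gives $\int_{\{f<1\}}-f\log f\dd x<\infty$, hence $f\log f\in L^1(\R^d)$.

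The only delicate step is verifying that the moment input of order $\beta$ is enough regardless of how small $\beta>0$ is; this is precisely why the free parameter $\alpha$ is introduced and chosen as a function of $\beta$ and $d$. Everything else is routine: the $L^\infty$ bound handles the tall, thin region $\{f\geq 1\}$, and the moment bound, together with the sub-polynomial growth of $-\log t$ at $0$, handles the wide, shallow region $\{f<1\}$ at infinity.
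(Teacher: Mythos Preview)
Your proof is correct, but it takes a genuinely different route from the paper's for the negative part. The paper handles $\int (f\log f)_-\dd x$ via Jensen's inequality: writing $(f\log f)_- = f\log(f^{-1}\vee 1)$, it computes
\[
\int (f\log f)_-\dd x - \int |x|^\beta f\dd x = \int f\log\big([f^{-1}\vee 1]\e^{-|x|^\beta}\big)\dd x \leq \log\int (1\vee f)\,\e^{-|x|^\beta}\dd x,
\]
which is finite because $f\in L^\infty$ and $\e^{-|x|^\beta}$ is integrable for any $\beta>0$. This is shorter and avoids the auxiliary parameter~$\alpha$ and the $B_R$ versus $\R^d\setminus B_R$ split. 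Your argument, on the other hand, is more hands-on: the pointwise bound $-t\log t\leq \alpha^{-1}t^{1-\alpha}$ together with the H\"older factorization $f^{1-\alpha}=(f|x|^\beta)^{1-\alpha}|x|^{-\beta(1-\alpha)}$ makes explicit how much of the moment is being traded for integrability, and the choice $\alpha<\beta/(\beta+d)$ quantifies exactly the threshold. Both approaches ultimately exploit the same mechanism---that any positive power of $|x|$ as a weight suffices to control the slow divergence of $-\log f$---but the paper packages it through concavity of $\log$ while you unpack it through a power bound and H\"older.
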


\begin{proof}
    We treat the positive and negative parts separately. For the positive part,
    \begin{align*}
        \int (f(x)\log f(x))_+ \dd x \leq \int f(x) \log (\|f\|_\infty \vee 1) \dd x = \log  (\|f\|_\infty \vee 1).
    \end{align*}
    As for the negative part, with $\beta$ as in the hypotheses of the lemma, by Jensen's inequality (for the concave function $t \mapsto \log t$) and H\"older's inequality,
    \begin{align*}
        \int (f(x)\log f(x))_- \dd x - \int |x|^\beta f(x) \dd x 
        &= \int f(x)[(\log f(x))_- + \log (\e^{-|x|^\beta})] \dd x\\
        % &= \int f(x)[-\log (f(x) \wedge 1) + \log (\e^{-|x|^\beta})] \dd x\\
        &= \int f(x)[\log ([f(x)^{-1} \vee 1] \e^{-|x|^\beta})] \dd x\\
        &\leq \log \int [1 \vee f(x)] \e^{-|x|^\beta} \dd x \\
        &\leq \log \int \e^{-|x|^\beta} \dd x + \log (\|f\|_\infty \vee 1),
    \end{align*}
    which is finite.
\end{proof}

We now show that, if $f,g \in L^1_+$ and they have absolutely integrable entropy densities, then $f \preceq g$ implies $H(f) \geq H(g).$

\begin{proposition}
\label{prop:S-Gamma-order}
    Let $f,g \in L^1_+$ be such that 
    $f\log f \in L^1$ and $g\log g\in L^1$.
    If $f \preceq g$, then $H(f) \geq H(g)$.
\end{proposition}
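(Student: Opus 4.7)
The plan is to derive the result from Proposition~\ref{prop:convex-dom} applied with a regularized convex function, and then pass to a limit. Since $H$ is defined only for probability densities (Definition~\ref{def:variance-entropy-def}), we implicitly have $\int f\dd x = \int g\dd x = 1$, so the equal-mass hypothesis of Proposition~\ref{prop:convex-dom} is available. The inequality $H(f)\geq H(g)$ is equivalent to $\int f\log f\dd x \leq \int g\log g\dd x$, which formally corresponds to Proposition~\ref{prop:convex-dom} applied with $F(t) = t\log t$; this $F$ has $F'(0^+) = -\infty$, however, and so fails the hypothesis $F'(0) > -\infty$ of that proposition.

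To remedy this I would introduce the regularized approximants $F_\varepsilon(t) := t\log(t + \varepsilon)$ for $\varepsilon\in(0,1]$. A direct computation gives $F_\varepsilon \in C^\infty([0,\infty))$, $F_\varepsilon(0) = 0$, $F_\varepsilon'(0) = \log\varepsilon > -\infty$, and
\[
F_\varepsilon''(t) = \frac{1}{t+\varepsilon} + \frac{\varepsilon}{(t+\varepsilon)^2} > 0,
\]
so $F_\varepsilon$ is convex on $[0,\infty)$. Proposition~\ref{prop:convex-dom} therefore yields $\int F_\varepsilon(f)\dd x \leq \int F_\varepsilon(g)\dd x$ for every $\varepsilon\in(0,1]$.

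The remaining task is to show $\int F_\varepsilon(f)\dd x \to \int f\log f\dd x$ as $\varepsilon\to 0^+$, and similarly for $g$. Pointwise, $F_\varepsilon(t) - F(t) = t\log(1 + \varepsilon/t)$ is nonnegative and decreases monotonically to $0$ as $\varepsilon\searrow 0$. Splitting on $\{f\leq 1\}$ (using $\log(1+1/f)\leq \log 2+|\log f|$) and on $\{f>1\}$ (using $\log(1+1/f)\leq \log 2$) yields the integrable pointwise majorant $F_1(f) - F(f) = f\log(1+1/f) \leq f\log 2 + |f\log f| \in L^1(\R^d)$. Monotone convergence applied to the nonnegative increasing sequence $F_1(f) - F_\varepsilon(f)\nearrow F_1(f) - F(f)$ then gives $\int F_\varepsilon(f)\dd x \to \int f\log f\dd x$, and the analogous argument handles $g$. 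Passing to the limit in the inequality above produces the desired $\int f\log f\dd x \leq \int g\log g\dd x$. The only part requiring genuine care is this limit passage, which hinges on finding a regularization $F_\varepsilon$ that simultaneously fits the hypotheses of Proposition~\ref{prop:convex-dom} and admits an integrable pointwise bound on $F_\varepsilon(f) - F(f)$ with no additional regularity imposed on $f$ or $g$; the rest is routine.
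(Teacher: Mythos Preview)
Your proof is correct and follows essentially the same approach as the paper: regularize $t\log t$ to meet the hypotheses of Proposition~\ref{prop:convex-dom}, then pass to the limit. The paper uses $F_\varepsilon(t)=(t+\varepsilon)\log(t+\varepsilon)-\varepsilon\log\varepsilon$ with dominated convergence, while you use $F_\varepsilon(t)=t\log(t+\varepsilon)$ with monotone convergence on $F_1(f)-F_\varepsilon(f)$; both choices work and the differences are cosmetic.
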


\begin{proof}
    For every $\varepsilon \in (0,1)$, the function $t \mapsto (t+\varepsilon)\log(t+\varepsilon) - \varepsilon\log\varepsilon$ satisfies the assumptions of Proposition~\ref{prop:convex-dom}. Hence, $f\preceq g$ implies that $H_\varepsilon(f) \geq H_\varepsilon(g)$, where
    \begin{align*}
        H_\varepsilon(f) := -\int_{\R^d} (f(x) + \varepsilon) \log (f(x)+\varepsilon) - \varepsilon\log\varepsilon \dd x.
    \end{align*} 
    Since 
    \[ 
        \lim_{\varepsilon\to0} (z + \varepsilon) \log (z+\varepsilon) - \varepsilon\log\varepsilon = z\log z
    \] 
    for every $z \geq 0$,
    it suffices to show that the limit $\varepsilon \to 0$ can be passed through the integrals defining $H_\varepsilon(f)$ and $H_\varepsilon(g)$ when $f$ and $g$ satisfy our assumptions. To this end, note that 
    \begin{align*}
        \left|(f(x) + \varepsilon) \log (f(x)+\varepsilon) - \varepsilon\log\varepsilon \right|
        &= f(x) \left|\log (f(x)+\varepsilon)\right| + \varepsilon\log\left(1 + \frac{f(x)}{\varepsilon}\right) \\
        &\leq f(x) (2\left|\log f(x)\right| + 2) + \varepsilon\left(\frac{f(x)}{\varepsilon}\right) \\
        &\leq 2f(x)|\log f(x)| + 3f(x)
    \end{align*}
    so we can conclude, under the assumptions, by the dominated convergence theorem.
\end{proof}

Finally, we can conclude~\eqref{eq:entropy-ineq}.

\begin{proof}[Proof of Corollary~\ref{cor:main-cor},~\eqref{eq:entropy-ineq}]
    By assumption, there is some $\beta>0$ such that $\int |x|^\beta d\mu(x) <\infty$. Now fix $t>0$ and let
    \[f := \solP^u_t \mu \quad \text{and}\quad g := \e^{t\Delta} \nu.\]
    By Theorem~\ref{thm:main-result}, we have that $f \preceq g$. Standard parabolic estimates, using that we have a $\beta$ moment on the initial data, then give
    \[\int |x|^\beta f(x)\dd x <\infty.\]
    Then by Proposition~\ref{prop:test-with-increasing-rearrange}, we have that
    \[\int |x|^\beta g(x)\dd x \leq \int |x|^\beta f(x)\dd x <\infty.\]
    We also have by standard parabolic estimates that $f,g \in L^\infty$. Thus by Lemma~\ref{lem:conditions-for-entropy}, we have that
        $f\log f \in L^1$ and $g\log g \in L^1$.
    Thus we conclude~\eqref{eq:entropy-ineq} by Proposition~\ref{prop:S-Gamma-order}. 
\end{proof}

\subsection{Proofs of Propositions~\ref{prop:convex-dom} and~\ref{prop:test-with-increasing-rearrange}}

We first introduce an object that is in some sense dual to $C_\mu(\alpha).$ 

\begin{definition}
    Given a finite, positive Borel measure $\mu$ we define the function
    \[\Gamma_\mu(h) := \int (f(x)-h)_+\dd x+\beta(\R^d),\quad h\geq 0\]
     where $\mu(\dd x)=f(x)\dd x+\beta(\dd x)$, is the Lebesgue decomposition of $\mu$ with respect to the Lebesgue measure.
\end{definition}

We now show that $C_\mu \leq C_\nu$ implies $\Gamma_\mu \leq \Gamma_\nu$.

\begin{lemma}\label{lem:equivalent_def}
    For any two positive Borel measures $\mu$ and $\nu$, if $\mu\preceq \nu$ then $\Gamma_\mu(h)\leq \Gamma_\nu(h)$ for all $h\geq 0.$
\end{lemma}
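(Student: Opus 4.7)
The plan is to establish the duality identity
\[\Gamma_\mu(h) = \sup_{\alpha\geq 0}\big(C_\mu(\alpha) - h\alpha\big)\]
for every $h\geq 0$ and every $\mu\in\mathcal{M}_+(\R^d)$. Once this is in hand, the lemma is immediate by monotonicity of the supremum: if $C_\mu(\alpha)\leq C_\nu(\alpha)$ for all $\alpha\geq 0$, then
\[\Gamma_\mu(h)=\sup_\alpha\big(C_\mu(\alpha)-h\alpha\big)\leq \sup_\alpha\big(C_\nu(\alpha)-h\alpha\big)=\Gamma_\nu(h).\]

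To prove the identity, I would fix the Lebesgue decomposition $\mu(\dd x)=f(x)\dd x+\beta(\dd x)$ and argue the two inequalities separately. For the ``$\geq$'' direction, for any Borel $E$ with $|E|=\alpha$ I would bound
\[\mu(E)-h\alpha = \int_E(f(x)-h)\dd x + \beta(E) \leq \int (f(x)-h)_+\dd x + \beta(\R^d) = \Gamma_\mu(h),\]
and then take the supremum first over $|E|=\alpha$ and then over $\alpha\geq 0$.

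For the matching ``$\leq$'' direction, when $h>0$ I would test against the near-optimizer $E^*:=\{f>h\}\cup N$, where $N$ is a Borel set of zero Lebesgue measure carrying the singular part $\beta$. By Chebyshev applied to $f\in L^1_+$, the set $\{f>h\}$ has finite Lebesgue measure $\alpha^*$, so $|E^*|=\alpha^*<\infty$, and
\[C_\mu(\alpha^*)\geq \mu(E^*) = \int_{\{f>h\}}f(x)\dd x + \beta(\R^d),\]
which rearranges to $C_\mu(\alpha^*)-h\alpha^*\geq \Gamma_\mu(h)$.

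The only mildly separate case will be $h=0$, which is handled by observing that $\Gamma_\mu(0)=\mu(\R^d)$ while $C_\mu(\alpha)\nearrow\mu(\R^d)$ as $\alpha\to\infty$, for instance by testing against large balls unioned with $N$. I do not expect any serious obstacle here; the entire argument is just convex duality between $C_\mu$ and $\Gamma_\mu$, and the only real bookkeeping is making sure the singular part $\beta$ can always be absorbed at no Lebesgue-measure cost, which is exactly the content of Remark~\ref{rem:C_mu-decomposition}.
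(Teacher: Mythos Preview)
Your proof is correct, and it takes a genuinely different route from the paper's. The paper passes through symmetric decreasing rearrangements: it invokes Lemma~\ref{lem:concentration-representation} to rewrite $\mu\preceq\nu$ as the ball inequality $\beta(\R^d)+\int_{B_r}f^*\leq\gamma(\R^d)+\int_{B_r}g^*$, then picks the particular radius $r(h)=\inf\{r:f^*(r)\leq h\}$ for which $\int(f-h)_+=\int_{B_{r(h)}}(f^*-h)$, and chains the obvious inequalities $\int_{B_{r(h)}}(g^*-h)\leq\int_{B_{r(h)}}(g^*-h)_+\leq\int(g^*-h)_+$. Your argument instead establishes the Legendre-type identity $\Gamma_\mu(h)=\sup_{\alpha\geq 0}(C_\mu(\alpha)-h\alpha)$ directly from the definitions, bypassing rearrangements and Lemma~\ref{lem:concentration-representation} entirely. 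This is more elementary and arguably more conceptual: it makes transparent that $\Gamma_\mu$ is just the concave conjugate of $\alpha\mapsto C_\mu(\alpha)$, so any pointwise ordering on $C$ transfers to $\Gamma$ automatically. The paper's approach, on the other hand, stays within the rearrangement framework that it has already built up, so it integrates more seamlessly with the surrounding lemmas---but yours stands on its own and would work even without the rearrangement machinery.
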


\begin{proof}
Suppose that $\mu(\dd x)=f(x)\dd x+\beta(\dd x)$ and $\nu(\dd x)=g(x)\dd x+\gamma(\dd x)$. Then Lemma~\ref{lem:concentration-representation} implies that since $\mu\preceq\nu$,
\begin{equation}\label{eq:ball_ineq}
\beta(\R^d)+\int_{B_r}f^*(x)\dd x\leq \gamma(\R^d)+\int_{B_r}g^*(x) \dd x
\end{equation}
for all $r\geq 0.$ Fix $h \geq 0$ and let $r(h)=\inf\{r\geq 0:f^*(r,0,\dots,0)\leq h\}$ so that
\[\int(f(x)-h)_+\dd x=\int(f^*(x)-h)_+\dd x=\int_{B_{r(h)}}(f^*(x)-h)\dd x.\]
Then
\begin{align*}
    \Gamma_\mu(h) 
    &=\beta(\R^d)+\int_{B_{r(h)}}(f^*(x)-h)\dd x \\
    &\leq\gamma(\R^d)+ \int_{B_{r(h)}}(g^*(x)-h)\dd x \\
    &\leq \gamma(\R^d)+\int_{B_{r(h)}}(g^*(x)-h)_+\dd x\\
    &\leq \gamma(\R^d)+\int(g^*(x)-h)_+\dd x\\
    &=\Gamma_\nu(h)
\end{align*}
where we've used~\eqref{eq:ball_ineq} in the first inequality. This completes the lemma.
\end{proof}

We can now use the inequality $\Gamma_f \leq \Gamma_g$ to straightforwardly conclude Proposition~\ref{prop:convex-dom}. 

\begin{proof}[Proof of Proposition~\ref{prop:convex-dom}]
    Note that since $F \in C^2$,
    \[F(x)= F'(0)x +\int_0^\infty F''(t) (x-t)_+\dd t,\]
    as can be verified by computing the second derivative of both sides. If $\int f(x) \dd x = \int g(x) \dd x$ and $f\preceq g$, then we have that 
    \begin{align*}
    \int F(f(x))\dd x &= F'(0)\int f(x)\dd x + \int_0^\infty F''(t)  \int (f(x)-t)_+ \dd x \dd t 
    \\&= F'(0)\int g(x)\dd x + \int_0^\infty F''(t) \Gamma_f(t)\dd t 
    \\&\leq F'(0)\int g(x)\dd x + \int_0^\infty F''(t) \Gamma_g(t)\dd t
    \\&= \int F(g(x))\dd x,
    \end{align*}
    where we use Lemma~\ref{lem:equivalent_def}. 
\end{proof}

For Proposition~\ref{prop:test-with-increasing-rearrange}, we need the following simple observation.

\begin{lemma}
\label{lem:Hardy-Littlewood-Polya}
    If $\mu \in\mathcal{M}^+$ and $\varphi : \R^d \to [0,\infty)$ has finite-measure sublevel set, then
    \[\int \varphi(x) \, \mu(\dd x) \geq \int \varphi^\vee(x) \, \mu^*(\dd x).\]
\end{lemma}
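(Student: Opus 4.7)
The plan is to apply Fubini on both sides to convert to integrals against sublevel-set measures, and then reduce the whole statement to a single pointwise inequality in the level parameter. Concretely, setting $A_s := \{\varphi \leq s\}$, the integral representations $\varphi(x) = \int_0^\infty (1-\indc_{A_s}(x))\,\dd s$ and $\varphi^\vee(x) = \int_0^\infty (1-\indc_{A_s^*}(x))\,\dd s$ together with Fubini (valid since $\mu\in\mathcal{M}_+(\R^d)$ is finite and all integrands are nonnegative) yield
\[
\int \varphi\,\dd\mu \;=\; \int_0^\infty \bigl(\mu(\R^d) - \mu(A_s)\bigr)\,\dd s
\quad\text{and}\quad
\int \varphi^\vee \,\dd\mu^* \;=\; \int_0^\infty \bigl(\mu^*(\R^d) - \mu^*(A_s^*)\bigr)\,\dd s.
\]

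Since $\mu^*(\R^d) = \mu(\R^d)$ by construction of the symmetric decreasing rearrangement, proving the lemma reduces to establishing the pointwise bound
\[
\mu(A_s) \;\leq\; \mu^*(A_s^*) \qquad \text{for every } s \geq 0.
\]
This is the core step. First, by the very definition of $C_\mu$, we have $\mu(A_s) \leq C_\mu(|A_s|)$, and Lemma~\ref{lem:concentration-representation} tells us $C_\mu(|A_s|) = C_{\mu^*}(|A_s|)$. Second, since $\mu^*$ is symmetric decreasing and $A_s^*$ is a ball centered at the origin of volume $|A_s|$, the supremum defining $C_{\mu^*}(|A_s|)$ is attained at $A_s^*$, giving $\mu^*(A_s^*) = C_{\mu^*}(|A_s|)$. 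Chaining these equalities and the initial inequality gives $\mu(A_s) \leq \mu^*(A_s^*)$, and integrating in $s$ closes the argument.

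The one place that deserves a small verification is the equality $\mu^*(A_s^*) = C_{\mu^*}(|A_s|)$. Writing $\mu^* = f^*\,\dd x + c\,\delta_0$ with $f^*$ symmetric decreasing, one sees directly from~\eqref{eq:rearrange-test} (or equivalently the bathtub principle) that $B_r$ maximizes $E \mapsto \int_E f^* \,\dd x$ among Borel $E$ with $|E| = |B_r|$, and since $0 \in B_r$ whenever $r > 0$, the atomic part always contributes $c$ as well, so $B_r$ indeed achieves $C_{\mu^*}(|B_r|)$. No real obstacle arises here; the argument is essentially bookkeeping once the pointwise inequality on level sets is identified.
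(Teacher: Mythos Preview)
Your proof is correct and takes a somewhat different route from the paper's. The paper first peels off the singular part of~$\mu$ by hand, using $\int \varphi\,\dd\mu \geq \int \varphi f\,\dd x + (\inf\varphi)\,\beta(\R^d)$ and $\varphi^\vee(0) = \inf\varphi$, and then for the density part applies a \emph{double} layer-cake (on both $f$ and~$\varphi$) together with the elementary rearrangement inequality~\eqref{eq:rearrange-test} applied to the indicators $\indc_{\{f>t\}}$ and $\indc_{\{\varphi\leq s\}}$. You instead layer-cake only in~$\varphi$, reduce to the single set inequality $\mu(A_s)\leq \mu^*(A_s^*)$, and dispatch that via the already-established identity $C_\mu = C_{\mu^*}$ (Lemma~\ref{lem:concentration-representation}) and the fact that balls saturate $C_{\mu^*}$. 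Your approach is arguably cleaner in that the $C_\mu$ machinery absorbs the singular part automatically, so no separate treatment of the atom is needed; the paper's version is in exchange more self-contained, going straight to~\eqref{eq:rearrange-test} without appealing to the somewhat longer Lemma~\ref{lem:concentration-representation}.
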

\begin{proof} 
    With the Lebesgue decomposition
    $\mu(\dd x) = f(x)\dd x + \beta(\dd x)$,
    we have
    \[\int \varphi(x)\, \mu(\dd x) \geq \int \varphi(x) f(x) \dd x + \inf_{x} \varphi(x) \beta(\R^d) = \int \varphi(x) f(x)\dd x + \beta(\R^d)\int \varphi^\vee(x) \delta_0(\dd x).\]
    Thus it suffices to prove the case that $\beta=0$. Then by~\eqref{eq:rearrange-test}, we have that for all $s,t\geq 0$
    \[ \int \indc_{\{f>t\}}(x)\indc_{\{\varphi\leq s\}}(x)\dd x\leq \int \indc_{\{f>t\}^*}(x)\indc_{\{\varphi\leq s\}^*}(x)\dd x.\]
    Using the layer cake representations of $\varphi$ and $f$ we thus find that
    \begin{align*}
    \int \varphi(x)f(x)\dd x&=\iiint \indc_{\{f>t\}}(x)(1-\indc_{\{\varphi\leq s\}}(x))\dd x\dd s\dd t
    \\&\geq \iiint \indc_{\{f>t\}^*}(x)(1-\indc_{\{\varphi\leq s\}}^*(x))\dd x\dd s\dd t 
    \\&=\int\varphi^\vee(x)f^*(x)\,\dd x
    \end{align*}
    as claimed.
\end{proof}

Proposition~\ref{prop:test-with-increasing-rearrange} is now direct.

\begin{proof}[Proof of Proposition~\ref{prop:test-with-increasing-rearrange}]
    By Lemma~\ref{lem:Hardy-Littlewood-Polya}, then Lemma~\ref{lem:test-with-monotone} using that $\varphi^\vee$ is symmetric increasing, $\mu \preceq \nu$, and $\nu$ is symmetric decreasing,
    \[\int \varphi(x)\, \mu(\dd x) \geq \int \varphi^\vee(x)\, \mu^*(\dd x) \geq \int \varphi^\vee(x)\, \nu(\dd x),\]
    as claimed.
\end{proof}

\section{Proof of Theorem~\ref{thm:torus}}

We now work on the $\T^d = [0,1]^d/\sim$ and for $f : \T^d\to \R$, let
\[\hat f(k) := \int \e^{-2\pi \mathrm{i} k \cdot x} f(x)\dd x.\]
We consider the vector field, defined in Fourier space by
\[\hat{u}^{T,h}(t,k) := \begin{cases}
     (-2\mathrm{i},\mathrm{i},0,\dotsc ,0)& k = (1,2,0,\dotsc ,0) \text{ and }T \leq t \leq T+h,\\
     (2\mathrm{i},-\mathrm{i},0,\dotsc ,0)& k =(-1,-2,0,\dotsc ,0) \text{ and } T \leq t \leq T+h,\\
     0 & \text{otherwise.}
\end{cases}
\]
We note that $u$ is compactly supported in Fourier space, $\hat{u}^{T,h}(t,-k) = \overline{\hat{u}^{T,h}(t,k) }$, and $$k \cdot \hat{u}^{T,h}(t,k) = 0,$$ thus ensuring $u$ is suitably smooth, real-valued and divergence-free.
We let $\varphi$ solve
\[\begin{cases}
    \partial_t \varphi - \Delta \varphi =0\\
    \varphi(0,\dd x) = \delta_0(\dd x),
\end{cases}\]
and for each $T,h>0$, and let $\psi^{T,h}$ solve
\[\begin{cases}
    \partial_t \psi^{T,h} - \Delta \psi^{T,h} + u^{T,h} \cdot \nabla \psi^{T,h} =0\\
    \psi^{T,h}(0,\dd x) = \delta_0(\dd x).
\end{cases}\]
We note that 
\[\psi^{T,h}(t,\,\cdot\,) = \varphi(t,\,\cdot\,) \quad \text{for } t \in [0,T].\]
We first state the following technical estimate. 

\begin{proposition}
\label{prop:torus}
    There exists $T_0>0$ such that for all $T\geq T_0$, there exists $h_0(T)>0$ such that 
    \[\liminf_{t \to \infty} \frac{\|\psi^{T,h}(t,\,\cdot\,)\|_{L^2_x}^2 - 1}{\|\varphi(t,\,\cdot\,)\|_{L^2_x}^2 - 1} > 1
    \]
    for all $0< h < h_0$.
\end{proposition}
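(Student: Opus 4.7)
The plan is Fourier-analytic. With $\hat{f}(k)=\int_{\T^d}\e^{-2\pi\mathrm{i}k\cdot x}f(x)\dd x$ and Plancherel, $\hat\varphi(t,k)=\e^{-4\pi^2|k|^2 t}$, so
\[
    \|\varphi(t,\,\cdot\,)\|_{L^2_x}^2-1 = \sum_{k\neq 0}\e^{-8\pi^2|k|^2 t} = 2d\,\e^{-8\pi^2 t}\bigl(1+O(\e^{-8\pi^2 t})\bigr),
\]
with leading contribution from the $2d$ unit-norm modes $\pm e_j$. Since $u^{T,h}$ is supported in $[T,T+h]$, pure diffusion resumes for $t\geq T+h$, so $\hat\psi^{T,h}(t,k)=\e^{-4\pi^2|k|^2(t-T-h)}\hat\psi^{T,h}(T+h,k)$ for $t\geq T+h$, and the same analysis yields, as $t\to\infty$,
\[
    \|\psi^{T,h}(t,\,\cdot\,)\|_{L^2_x}^2-1 \sim \e^{-8\pi^2(t-T-h)}\sum_{|k|=1}\bigl|\hat\psi^{T,h}(T+h,k)\bigr|^2.
\]
Thus the liminf in the statement is in fact a limit equal to $\ell(T,h):=\bigl[\sum_{|k|=1}|\hat\psi^{T,h}(T+h,k)|^2\bigr]/\bigl[2d\,\e^{-8\pi^2(T+h)}\bigr]$, and it suffices to show $\ell(T,h)>1$.

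To compute $\hat\psi^{T,h}(T+h,k)$ for $|k|=1$, I would expand in $h$. A direct Fourier calculation using $\hat u^{T,h}(t,K)=(-2\mathrm{i},\mathrm{i},0,\dots,0)$ and $\hat u^{T,h}(t,-K)=\overline{\hat u^{T,h}(t,K)}$ with $K=(1,2,0,\dots,0)$ gives, for $t\in[T,T+h]$,
\[
    \partial_t\hat\psi^{T,h}(t,k) = -4\pi^2|k|^2\hat\psi^{T,h}(t,k) + 2\pi(2k_1-k_2)\bigl[\hat\psi^{T,h}(t,k+K)-\hat\psi^{T,h}(t,k-K)\bigr].
\]
Starting from $\hat\psi^{T,h}(T,\,\cdot\,)=\hat\varphi(T,\,\cdot\,)$, a Duhamel/Picard expansion to first order in $h$ yields
\[
    \hat\psi^{T,h}(T+h,k) = \e^{-4\pi^2|k|^2(T+h)} + 2\pi h(2k_1-k_2)\bigl[\e^{-4\pi^2|k+K|^2 T}-\e^{-4\pi^2|k-K|^2 T}\bigr] + R(h,k),
\]
with remainder $R(h,k)=O(h^2\e^{-4\pi^2 T})$ uniformly in $k$ with $|k|=1$, obtained by a short Gronwall argument exploiting that $\hat u^{T,h}$ is Fourier-supported at only two sites and that the non-trivial neighbors $k\pm K$ of unit-norm modes all satisfy $|k\pm K|^2\geq 2$.

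Finally, I would square the expansion and sum over $|k|=1$. Modes $\pm e_j$ with $j\geq 3$ have $2k_1-k_2=0$ and are unchanged. For $k=\pm e_2$, the low-norm neighbor $k\mp K=(\mp 1,\mp 1,0,\dots,0)$ satisfies $|k\mp K|^2=2<10=|k\pm K|^2$, so the real cross-term with $\e^{-4\pi^2(T+h)}$ produces a \emph{positive} contribution of order $+4\pi h\,\e^{-12\pi^2 T}$ to each of $|\hat\psi^{T,h}(T+h,\pm e_2)|^2$. For $k=\pm e_1$, both neighbors satisfy $|k\pm K|^2\in\{4,8\}$, producing only a smaller negative contribution of order $-8\pi h\,\e^{-20\pi^2 T}$. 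Summing and dividing by $2d\,\e^{-8\pi^2(T+h)}$ yields
\[
    \ell(T,h) = 1 + \frac{4\pi h}{d}\,\e^{-4\pi^2 T}\bigl(1+O(\e^{-8\pi^2 T})+O(h\,\e^{4\pi^2 T})\bigr),
\]
which exceeds $1$ once $T_0$ is chosen so the first error is $<\tfrac12$ for $T\geq T_0$, and then $h_0(T)$ is chosen so the second is $<\tfrac12$ for $h<h_0(T)$. The main obstacle is controlling the $O(h^2)$ Duhamel remainder sharply enough in each of the relevant Fourier modes so that it does not swamp the $O(h)$ leading correction. Because the advection couples each $k$ only to $k\pm K$, the mild-formulation iteration closes after finitely many steps for the modes of interest, and combined with the explicit diffusive decay of $\hat\varphi(T,\,\cdot\,)$ this yields the required uniform bound.
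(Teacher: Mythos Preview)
Your proposal is correct and follows essentially the same route as the paper: both arguments reduce the question to comparing the low-mode mass $\sum_{|k|=1}|\hat\psi^{T,h}(T+h,k)|^2$ with $\sum_{|k|=1}|\hat\varphi(T+h,k)|^2$, then compute the first-order effect in $h$ of the advection on each unit mode using the explicit Fourier ODE, and observe that the dominant contribution (from $k=\pm e_2$ coupling to $|k\mp K|^2=2$) is positive and of size $\sim h\,\e^{-12\pi^2 T}$, beating the negative $\pm e_1$ contribution. The paper phrases this as computing the right time derivative of the low-mode sum at $T^+$ and invoking Taylor's theorem, whereas you write out the Duhamel expansion mode by mode; these are the same computation, and your $\ell(T,h)$ is exactly the limit the paper bounds from below. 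If anything, your treatment of the $O(h^2)$ remainder---via the energy bound $\|\psi(s,\cdot)-1\|_{L^2}\le C\e^{-4\pi^2 T}$ to control all nonzero modes uniformly on $[T,T+h]$---is more explicit than the paper's appeal to Taylor's theorem.
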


Theorem~\ref{thm:torus} is then follows directly.

\begin{proof}[Proof of Theorem~\ref{thm:torus}]
    Note that for all $t>0$, $\|\varphi(t,\,\cdot\,)\|_{L^2_x}^2 -1 = \sum_{|k| \geq 1} \e^{- 8\pi^2 |k|^2 t} >0.$ Then by Proposition~\ref{prop:torus}, for suitably chosen $T,h$ and taking $t$ sufficiently large,
    \begin{align*}
        \frac{\|\psi^{T,h}(t,\,\cdot\,)\|_{L^2_x}^2 -\|\varphi(t,\,\cdot\,)\|_{L^2_x}^2}{ \|\varphi(t,\,\cdot\,)\|_{L^2_x}^2 - 1}  =  \frac{\|\psi^{T,h}(t,\,\cdot\,)\|_{L^2_x}^2 -1}{ \|\varphi(t,\,\cdot\,)\|_{L^2_x}^2 - 1} -1 >0,
    \end{align*}
    and thus
    \[\|\psi^{T,h}(t,\,\cdot\,)\|_{L^2_x} > \|\varphi(t,\,\cdot\,)\|_{L^2_x}.
    \]
    This is then directly~\eqref{eq:L2-torus-inversion}. Then~\eqref{eq:torus-not-preceq} follows from~\eqref{eq:L2-torus-inversion} and Proposition~\ref{prop:convex-dom} applied with $F(t) = t^2$. While the constructed vector field is not smooth in time, the arguments of Lemma~\ref{lem:approximate-by-mollification} show that a smooth vector field with the same properties can be obtained by mollification in time.
\end{proof}

The proof of Proposition~\ref{prop:torus} is as follows. For $t \leq T$, we have that $\psi^{T,h} = \varphi$. We then evaluate the (right) time derivative of \textit{low mode mass} of $\psi^{T,h}$ at time $T$, whereby low mode mass, we mean the squared $L^2$ on the lowest non-zero Fourier modes $|k| = 1$. Since we have an exact formula for $\psi^{T,h}(T,\,\cdot\,) = \varphi(T,\,\cdot\,)$, we can exactly compute this time derivative. $\hat u^{T,h}$ was chosen as effectively the simplest vector field for which this time derivative is larger than the time derivative of the low mode mass of $\varphi(T,\,\cdot\,)$. Then by Taylor's theorem, for sufficiently short times after $T$, we must have that the low mode mass of $\psi^{T,h}$ is larger than the low mode mass of $\varphi$. Choosing $h$ small enough to then turn off the advecting flow at this time, we have for times $[T+h,\infty)$ that $\psi^{T,h}, \varphi$ are just both freely diffusing under the heat equation. Since $\hat \psi^{T,h}(t,0) = \hat \varphi(t,0) = 1$ and the heat equation kills larger Fourier modes at a faster exponential rate, which of $\psi^{T,h}, \varphi$ have a larger $L^2$ norm at large times is determined by which has more low mode mass at $t = T+h$. Thus by construction, at large times $\psi^{T,h}$ has a larger $L^2$ norm than~$\varphi$. 

\begin{proof}[Proof of Proposition~\ref{prop:torus}]
    Note that 
    \[\hat \varphi(t,k) = \e^{- 4 \pi^2 |k|^2 t}\]
    and so 
    \[\frac{\dd}{\dd t} \sum_{|k|=1} |\hat \varphi|^2(T,k) = - 32 \pi^2 T \e^{- 8 \pi^2 T}.\]
    On the other hand, a direct computation in Fourier space gives that
    \[\frac{\dd}{\dd t}\sum_{|k| = 1} |\hat \psi^{T,h}|^2(T^+,k) = - 32\pi^2 T \e^{- 8 \pi^2 T} + 8\pi  \e^{-4\pi^2 T}\big(\e^{-8 \pi^2 T} -2 \e^{-16 \pi^2 T} + 2 \e^{-32 \pi^2 T} - \e^{-40 \pi^2 T}\big),\]
    where by $T^+$ we mean we compute the derivative from the right. Taking $T$ sufficiently large, we can ensure 
    \[\e^{-8 \pi^2 T} -2 \e^{-16 \pi^2 T} + 2 \e^{-32 \pi^2 T} - \e^{-40 \pi^2 T} \geq \frac{1}{2} \e^{-8 \pi^2 T},\]
    so that 
    \[\frac{\dd}{\dd t}\sum_{|k| = 1} |\hat \psi^{T,h}|^2(T^+,k) 
 \geq \frac{\dd}{\dd t} \sum_{|k|=1} |\hat \varphi|^2(T,k) +  \frac{1}{2} \e^{-8 \pi^2 T}.\]
    Taking then $h$ sufficiently small and using Taylor's theorem, we can ensure that
    \[\sum_{|k| = 1} |\hat \psi^{T,h}|^2(T+h,k) \geq  \sum_{|k|=1} |\hat \varphi|^2(T+h,k) + \frac{1}{4} \e^{-8 \pi^2 T} h.\]
    Then we note that for $t \geq T +h$, we since $u(t,x)=0$, 
    \begin{align*}\|\psi(t,\,\cdot\,)\|_{L^2_x}^2 -1 & \geq \sum_{|k| = 1} |\hat \psi^{T,h}|^2(t,k) 
    \\&= \e^{- 8 \pi^2 (t - (T+h))} \sum_{|k| = 1} |\hat \psi^{T,h}|^2(T+h,k)
    \\&\geq \e^{- 8 \pi^2 (t - (T+h))}  \sum_{|k|=1} |\hat \varphi|^2(T+h,k) + \frac{1}{4} \e^{- 8 \pi^2 (t - (T+h))}\e^{-8 \pi^2 T} h,
    \end{align*}
    while
    \begin{align*}\|\varphi(t,\,\cdot\,)\|_{L^2_x}^2 -1 &\leq \e^{- 8 \pi^2 (t - (T+h))} \sum_{|k| = 1} |\hat \varphi|^2 (T + h,k) + \e^{-16 \pi^2 (t - (T+h))} \sum_{|k| >1} |\hat \varphi|^2(T+h,k)
    \\&\leq  \e^{- 8 \pi^2 (t - (T+h))} \sum_{|k| = 1} |\hat \varphi|^2 (T + h,k) + \e^{-16 \pi^2 (t - (T+h))} C(T,h,d).
    \end{align*}
    Thus 
    \[\liminf_{t \to\infty}\frac{\|\psi(t,\,\cdot\,)\|_{L^2_x}^2 -1 }{\|\varphi(t,\,\cdot\,)\|_{L^2_x}^2 -1} \geq \liminf_{t \to\infty} \frac{\sum_{|k|=1} |\hat \varphi|^2(T+h,k) + \frac{1}{4}\e^{-8 \pi^2 T} h}{ \sum_{|k| = 1} |\hat \varphi|^2 (T + h,k) + \e^{-8\pi^2 (t - (T+h))} C(T,h,d)} > 1,\]
    as claimed.
\end{proof}

\begingroup
    {\small
    \bibliographystyle{alpha}
    \bibliography{keefer-references,references2}
    }
\endgroup
 
\end{document}